\documentclass[12pt]{amsart}

\usepackage[english]{babel}
\usepackage[latin1]{inputenc}
\usepackage{amsmath,amssymb, color,enumerate,amssymb,wasysym,mathrsfs,dsfont}
 \usepackage{mathptmx}
 \usepackage[all,cmtip]{xy}
\usepackage{xcolor}
\usepackage{soul}
 \DeclareMathAlphabet{\mathcal}{OMS}{cmsy}{m}{n}

\DeclareSymbolFont{rsfscript}{OMS}{rsfs}{m}{n}
\DeclareSymbolFontAlphabet{\mathrsfs}{rsfscript}

\DeclareSymbolFont{AMSb}{U}{msb}{m}{n}
\DeclareSymbolFontAlphabet{\mathbb}{AMSb}

\DeclareSymbolFont{eufrak}{U}{euf}{m}{n}
\DeclareSymbolFontAlphabet{\gothic}{eufrak}

\newcommand\Imm{\operatorname{Im}}

\DeclareMathOperator{\Triv}{\bf Triv}

\newcommand{\Rg}{\mathsf{RGrp}}
\newcommand{\Rzs}{\mathsf{Rzs}}
\newcommand{\Set}{\mathsf{Set}}
\newcommand{\Sym}{\operatorname{Sym}}
\newcommand{\Aut}{\operatorname{Aut}}
\newcommand{\End}{\operatorname{End}}

\newcommand{\Cal}[1]{{\mathcal #1}}
\newcommand{\Grp}{\mathsf{Grp}}
\newcommand{\DiSGrp}{\mathsf{DiSGrp}}

\newcommand{\lGrp}{\mbox{\rm -{\bf Grp}}}

\newcommand{\id}{\operatorname{id}}

\newcommand{\RGrp}{\mathsf{RGrp}}
\DeclareMathOperator{\res}{res}
\DeclareMathOperator{\ext}{ext}

\newtheorem{theorem}{Theorem}[section]
\newtheorem{proposition}[theorem]{Proposition}
\newtheorem{lemma}[theorem]{Lemma}
\newtheorem{corollary}[theorem]{Corollary}
\theoremstyle{definition}
\newtheorem{definition}[theorem]{Definition}
\newtheorem{remark}[theorem]{Remark}

\newtheorem{Ex}[theorem]{Example}

\newcommand{\SGrp}{\mathsf{SGrp}}
\newcommand{\DSGp}{\mathsf{DSGp}}
\newcommand{\Hom}{\operatorname{Hom}}

\begin{document}
  \title{A pretorsion theory for right groups}
     \author{Alberto Facchini}
\address[Alberto Facchini]{Dipartimento di Matematica ``Tullio Levi-Civita'',\linebreak Universit\`a di 
Padova, 35121 Padova, Italy}
 \email{facchini@math.unipd.it}
\thanks{The second author was partially supported by GNSAGA, by the research project PIACERI ``ACIVA - Anelli commutativi, loro
	ideali e varietà algebriche'' and by the research project PRIN 2022 ``Unirationality,
	Hilbert schemes, and singularities''.}
\author{Carmelo Antonio Finocchiaro}
\address[Carmelo Antonio Finocchiano]{Dipartimento di Matematica e Informatica, Universit\`a\ di Catania, Citt\`a\ Universitaria, viale Andrea Doria 6, 95125 Catania, Italy}
\thanks{}
\email{cafinocchiaro@unict.it}

   \keywords{Right group, right zero semigroup, pointed right group, pretorsion theory}

      \begin{abstract} Let $S$ be a right group. Then there exist two congruences $\sim$ and $\equiv$ on $S$ such that $S$ is the product of its quotient semigroups $S/{\sim}$ and $S/{\equiv}$, where $S/{\sim}$ is a group and $S/{\equiv}$ is a right zero semigroup. If $E$ is the set of all idempotents of $S$ and we fix an element $e_0\in E$, then the pointed right group $(S,e_0)$ is the coproduct of its pointed subsemigroups $(Se_0,e_0)$ and $(E,e_0)$ in the category of pointed right groups. In general, there is a pretorsion theory in the category of right groups in which the torsion objects are right zero semigroups and the torsion-free objects are groups.
  \end{abstract}

    \maketitle

{\small 2020 {\it Mathematics Subject Classification.} Primary 18B40. Secondary 18E40, 20M07.}

\section{Introduction} 

Right groups form a widely studied class of semigroups \cite[Section~1.11]{CPI}. Every right group $S$ is, up to isomorphism, the product of a non-empty right zero semigroup $E$
and a group $G$. In applications, it is often convenient to also consider {\em pointed} right groups, instead of right groups. This occurs, for instance, in the study of digroups (\cite[Section~4]{Kinyon} and  \cite{FFD}). A {\em pointed right group} is a pair $(S,e_0)$, where 
$S$ is a right group and $e_0$ is an idempotent element of $S$. 

The category of right groups and that of pointed right groups exhibit rather different behavior. Some of the differences are obvious. For example, the category $\RGrp$ of right groups has no initial object, whereas the category $\RGrp_*$ of pointed right groups has a null object. The main difference between the two categories, however, lies in the representation of an object $S$
as a product of a right zero semigroup $E$ and a group $G$. In the category $\RGrp_*$ of pointed right groups, this representation $S\cong E\times G$ is simultaneously a product and a coproduct. By contrast, in the category $\RGrp$ of right groups we are faced with a pretorsion theory \cite{FF, fa-fi-gr}: for every right group $S$
there is a pre-exact sequence $E(S) \xrightarrow{i} S \xrightarrow{\pi} S/{\sim}$
in $\RGrp$, where $E(S) $ is the subsemigroup of all idempotent elements of $S$, 
$\sim$ is the smallest congruence on $S$ in which all elements of $E(S)$ are pair-wise congruent, and the quotient semigroup $S/{\sim}$ is a group (Section~\ref{pretorsion}). Thus, this is a pretorsion theory in the category $\RGrp$ of right groups in which the pretorsion objects are the nonempty right zero semigroups, and the torsion-free objects are the groups.
Note that this pretorsion theory is not a torsion theory in the category $\RGrp$ of right groups, since on the one hand the category $\RGrp$  has no null object but only terminal objects, and on the other hand because in this pretorsion theory there are several trivial morphisms between two objects $S$ and 
$S'$, one for each idempotent element of $S'$.  That is, there is exactly one trivial morphism from 
$S$  to $S'$ for each idempotent element $e'$ of $S'$, namely the morphism constantly equal to $e'$.
The trivial morphisms $S\to S$
 induce modulo $\sim$ all and only the morphisms $S/{\sim}\to S$  in the category $\RGrp$ that are right inverses of the canonical projection $\pi\colon S\to S/{\sim}$. In the category $\RGrp_*$ of pointed right groups, on the contrary, the canonical projection $\pi\colon S\to S/{\sim}$ has a unique right inverse.
 
The category $\RGrp_*$ of pointed right groups is equivalent to the product category of the category $\Set_*$ of pointed sets and the category $\Grp$ of groups (Theorem~\ref{equiv2}), and the category $\RGrp$ of right groups is equivalent to the product category of the category $\Set_{\ne\emptyset}$ of non-empty sets and the category $\Grp$ (Theorem~\ref{equiv1}). 

Right groups and pointed right groups form varieties in the sense of Universal Algebra, of signature (2,2) and $(2,0,1)$  respectively (Section~\ref{Crg}).
Trivially, the classes of semigroups that are groups or right groups do not form subvarieties of the variety of signature $(2)$ of semigroups, since multiplicatively closed subsets of groups or right groups are not, respectively, groups or right groups. 

\section{Preliminary notions and terminology}

\subsection{Direct-product decompositions of semigroups}

For any pair $S,T$ of semigroups, the {\em external direct product} $S\times T$ of $S$ and $T$ is the cartesian product endowed with the componentwise multiplication. As far as internal direct-product decompositions are concerned, notice that in category theory, the fact that $S\cong A\times B$ ($S$ is isomorphic to the product of two objects $A$ and $B$) is equivalent to the existence of two morphisms $\varphi\colon S\to A$ and $\psi\colon S\to B$ whose product $\varphi\times\psi\colon S\to A\times B$ is an isomorphism. In this case, $\varphi$ and $\psi$ are necessarily epimorphisms. For semigroups, this means that we must not talk of direct-product decompositions of a semigroup $S$ as a direct product of two subsemigroups $A$ and $B$ of $S$, but of a direct product of $S$ as a direct-product decomposition of two {\em quotients} $A$ and $B$ of $S$. Equivalently, a direct product of $S$ corresponds to (is) a pair $(\sim,\equiv)$ of congruences of $S$. 

\begin{remark} {\rm This explains why when we deal with direct-product decompositions of a group $G$, we don't deal with a product-decomposition of $G$ as a direct product of two {\em subgroups} of $G$, but as a direct product of two {\em normal} subgroups of $G$. Similarly, when we deal with direct-product decompositions of a right group $S$ in Theorem~\ref{1.1}, we will see that $S$ is the internal direct product of two quotients $E$ and $G$ of $S$ and we will have two projections $S\to E$ and $S\to G$.

Another example is given by the variety of rings with identity. If we construct the external direct product $R\times S$ of two rings $R$ and $S$ with identity, then $R$ and $S$ are homomorphic images of $R\times S$, they are not subrings of $R\times S$, because the identities are different.}\end{remark}

Clearly, the set of all congruences of a semigroup $S$ is a bounded lattice under inclusion with least element the identity congruence $=$ and greatest element the trivial congruence $\omega$.

Now if $S$ is any set and $\sim,\equiv$ are two relations on $S$, it is possible to define their {\em composite relation} $\sim\circ\equiv$, that is $$\{\,(a,b)\in S\times S\mid\ {\mbox{\rm there exists }}c\in S\ {\mbox{\rm with }}a\sim c\ {\mbox{\rm and }}c\equiv b\,\}.$$ 

The following fact is straightforward (see also  \cite[Theorem 5.9]{Burris}).

\begin{lemma} Let $S$ be a semigroup and $\sim,\ \equiv$ be two congruences on $S$. If $\sim$ and $\equiv$ are permutable (that is, $\sim\circ\equiv$ and $\equiv\circ\sim$ coincide), then the least upper bound $\sim\vee\equiv$ of $\sim$ and $\equiv$ in the lattice of all congruences of $S$ coincides with ${\sim}\circ{\equiv}$.\end{lemma}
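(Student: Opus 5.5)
The plan is to show that the relation ${\sim}\circ{\equiv}$ is itself a congruence on $S$. It visibly contains both $\sim$ and $\equiv$, and it is contained in every congruence that contains both. Together these facts make it the least upper bound ${\sim}\vee{\equiv}$ in the lattice of congruences of $S$.

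Containment comes first, using reflexivity. If $a\sim b$, then $a\sim b$ and $b\equiv b$, so $(a,b)\in{\sim}\circ{\equiv}$. Symmetrically, if $a\equiv b$, then $a\sim a$ and $a\equiv b$. Minimality is also immediate. If $\theta$ is any congruence with ${\sim}\subseteq\theta$ and ${\equiv}\subseteq\theta$, and $a\sim c\equiv b$, then $a\,\theta\, c\,\theta\, b$, so $a\,\theta\, b$ by transitivity of $\theta$.

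It remains to check that ${\sim}\circ{\equiv}$ is a congruence.

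\emph{Reflexivity} is clear.

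\emph{Compatibility with multiplication.} Suppose $a\sim c\equiv b$ and $s\in S$. Then $sa\sim sc\equiv sb$ and $as\sim cs\equiv bs$, because $\sim$ and $\equiv$ are each compatible with multiplication. Since compatibility on each side separately implies full compatibility by transitivity, this suffices once transitivity is known.

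\emph{Symmetry.} This is where permutability is used. If $a\sim c\equiv b$, then $b\equiv c\sim a$, so $(b,a)\in{\equiv}\circ{\sim}$. By hypothesis ${\equiv}\circ{\sim}={\sim}\circ{\equiv}$, so $(b,a)\in{\sim}\circ{\equiv}$.

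\emph{Transitivity.} This is the step needing the most care, though it is still short. Suppose $a\sim c\equiv b$ and $b\sim d\equiv e$. Then $(c,d)\in{\equiv}\circ{\sim}={\sim}\circ{\equiv}$, so there is $f$ with $c\sim f\equiv d$. Now $a\sim c\sim f$ gives $a\sim f$, and $f\equiv d\equiv e$ gives $f\equiv e$. Hence $(a,e)\in{\sim}\circ{\equiv}$.

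Combining these steps, ${\sim}\circ{\equiv}$ is a congruence containing $\sim$ and $\equiv$ and contained in every such congruence, so it equals ${\sim}\vee{\equiv}$.
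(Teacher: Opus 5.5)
Your argument is correct and is the routine verification the paper has in mind. The paper gives no proof of its own; it calls the fact straightforward and refers to Burris--Sankappanavar. You show that ${\sim}\circ{\equiv}$ is itself a congruence, using permutability exactly for symmetry and transitivity, which makes it automatically the least congruence containing $\sim$ and $\equiv$.

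One small simplification: compatibility does not need transitivity, since $a\sim c\equiv b$ and $a'\sim c'\equiv b'$ give $aa'\sim cc'\equiv bb'$ directly.
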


The standard necessary and sufficient properties for internal direct-prod\-uct decompositions of a group $G$ as a direct product of two normal subgroups $A$ and $B$ (that the intersection $A\cap B$ is trivial and that $AB=G$) for a semigroup $S$ become:

\begin{definition}\label{2.2} {\rm Let $S$ be a semigroup and $\sim,\equiv$ be two congruences on $S$. We say that $S=(S/{\sim})\times (S/{\equiv})$ is the {\em direct product} of its homomorphic images $S/\!\sim$ and $S/\equiv$ if $\sim$ and $\equiv$ are complementary permutable conguences, that is $\sim\circ\equiv$ is equal to $\equiv\circ\sim$ and is equal to the trivial congruence $\omega$, and $\sim\wedge\equiv$ is the identity congruence $=$.}\end{definition}

See \cite[Theorem 5.9]{Burris}. The fact that the two congruences $\sim$ and $\equiv$ must be permutable in Definition \ref{2.2} follows from that fact that if $A$ and $B$ are semigroups, then the external direct products $A\times B$ and $B\times A$ are isomorphic via the switch $s\colon A\times B\to B\times A$, so that there is an isomorphism $\varphi\colon S\to A\times B$ if and only if there is an isomorphism $\psi=e\varphi\colon S\to B\times A$.
Recall that semigroups are not congruence permutable in general, while groups and rings are (\cite[Examples on p.~87]{Burris} and \cite{Ham75}). The condition ``$\sim\circ\equiv$ is equal to the trivial congruence $\omega$'' in Definition \ref{2.2} is equivalent to the fact that the product morphism $$S\to (S/{\sim})\times (S/{\equiv})$$ is a surjective mapping, and the condition ``$\sim\wedge\equiv$ is the identity $=$'' is equivalent to the fact that the product morphism $S\to S/\!\sim\!\times \,S/\!\!\equiv$ is an injective mapping. Of course, Definition~\ref{2.2} applies to any algebra in the sense of Universal Algebra and can be extended from the case of two congruences to any finite number of congruences. 

\begin{definition}\label{mmm}{\rm Let $A$ be any algebra and $\sim_1,\dots,\sim_n$ be $n$ congruences on $A$. Then $A=A/\!\sim_1\times \dots\times A/\!\sim_n$ is the {\em direct product} of its homomorphic images $A/\!\sim_1, \dots, A/\!\sim_n$ if $\sim_1,\dots,\sim_n$ are pairwise permutable conguences that are {\em coindependent}, that is $\sim_i\vee\left(\wedge_{j\ne i}\sim_j\right)$ is equal to the trivial congruence $\omega$ on $A$ for every $i=1,\dots,n$, and $\wedge_{i=1}^n\sim_i$ is the identity congruence~$=$.}\end{definition}

For related results, see \cite[Proposition 6.4]{Rogelio}, \cite[Sections~2.6 and~2.8]{libro1} and \cite[p.~94]{libro2}. Also, compare Definition~\ref{mmm} with the notion of semidirect-product decomposition of an algebra $A$ studied in \cite{David}. Given any algebra $A$, any congruence $\sim$ on $A$ and any subalgebra $B$ of $A$, then $A={\sim}\rtimes B$ is the {\em semidirect product} of the congruence $\sim$ and the subalgebra $B$ if the composite morphism $\pi\iota_B\colon B\to A/{\sim}$ is an isomorphism, where $\iota_B\colon B\to A$ is the inclusion and $\pi\colon A\to A/{\sim}$ is the canonical projection.

\subsection{Right groups}\label{RG}

We will make use of the notation, the terminology and several results in \cite[Section~4]{Kinyon} and \cite{CPI}. 
A semigroup $(S,\cdot)$ is a {\em right zero semigroup} \cite[p.~4]{CPI} if $a\cdot b=b$ for all $a,b\in S$. For these semigroups we will usually write the operation $\cdot$ as $\pi_2$, because it corresponds to the second canonical projection $\pi_2\colon S\times S\to S$. Thus $a\,\pi_2\, b=b$. Similarly, $a\,\pi_1\, b=~a$. Hence right zero semigroups are those of the form $(S,\pi_2)$ for some set $S$. The full subcategory of the category of semigroups whose objects are all right zero semigroups is clearly isomorphic to the category $\Set$ of sets, because every mapping between two right zero semigroups is a semigroup morphism.

\bigskip

For an arbitrary semigroup $S$, let $L$ be the set of all left identities of $S$, that is, $L:=\{\,e\in S\mid ex=x$ for every $x\in S\,\}$. Then $L$ is a subsemigroup of $S$,  the operation induced on it by the operation $\cdot$ of $S$ is the operation $\pi_2$, and we have an embedding $(L,\pi_2)\to (S,\cdot)$. For every $e\in L$, we can consider the centralizer $C_S(e):=\{\, x\in S\mid ex=xe\,\}$ of $e$ in $S$. Then each $C_S(e)$ is also a subsemigroup of $S$, which contains $e$ as a two-sided identity, so that $C_S(e)$ is a monoid. Moreover, each $C_S(e)$ is a left ideal for $S$, that is $yC_S(e)\subseteq C_S(e)$ for every $y\in S$. If $S$ is either left cancellative or right cancellative, then the monoids $C_S(e)$ are pair-wise disjoint, because if $x\in C_S(e)\cap C_S(e')$, then $ex=xe=x$ and $e'x=xe'=x$, so $e=e'$ by one-sided cancellativity. The union of all these left ideals $C_S(e)$ is a  left ideal of $S$, hence in particular a subsemigroup of $S$. This proves that if the union of all these left ideals $C_S(e)$ is the semigroup $S$ itself and $S$ is either left cancellative or right cancellative, then the monoids $C_S(e)$ form a partition of $S$, and the embedding $(L,\pi_2)\to (S,\cdot)$ has a left inverse $(S,\cdot)\to (L,\pi_2)$. This left inverse of the embedding is the semigroup morphism that maps all elements of the block $C_S(e)$ of the partition to $e$. The class of right groups, which we will introduce with Theorem~\ref{1.1}, is a natural class of left cancellative semigroups $S$ that have these properties, that is, such that $S$ is the union of all centralizers of the left identities of $S$.

\bigskip

Given a semigroup $(S,\cdot)$, we will denote by $E(S,\cdot)$ the set of all idempotents of $S$. Recall the following important lemma. 

\begin{lemma}\label{1.2} {\rm \cite[Lemma~1.26]{CPI}}  Every idempotent of a right simple semigroup $S$ is a left identity for $S$.
\end{lemma}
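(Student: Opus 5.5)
Fix an idempotent $e\in S$; we must show $ex=x$ for every $x\in S$. The plan uses only the definition of right simplicity, namely that $S$ has no proper right ideals, so $aS=S$ for every $a\in S$, together with idempotence of $e$ and associativity.

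First, apply right simplicity to the element $e$. Since $eS$ is a right ideal of $S$ (because $(eS)S\subseteq eS$) and it is nonempty, it must be all of $S$. Hence for an arbitrary $x\in S$ there exists $y\in S$ with $x=ey$.

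Second, multiply this equation on the left by $e$ and use $e^2=e$:
$$ex=e(ey)=(ee)y=ey=x.$$
As $x$ was arbitrary, $e$ is a left identity for $S$.

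There is essentially no obstacle here. The only point requiring care is the convention for right simplicity. I will take it to mean that $S$ has no right ideals other than $S$ itself, so in particular $aS=S$ for all $a$. One might worry whether the principal right ideal is $eS$ or $eS\cup\{e\}$. In the second case, $eS\cup\{e\}=S$ still gives the conclusion, because $e=ee\in eS$, so $eS\cup\{e\}=eS$ and every element still has the form $ey$.
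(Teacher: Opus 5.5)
Your proof is correct: right simplicity gives $eS=S$, so every $x$ has the form $ey$, and then $ex=e^2y=ey=x$. The paper proves nothing here and only cites Clifford--Preston, Lemma~1.26, whose argument is exactly this one; your caveat about $eS\cup\{e\}$ is harmless but unnecessary, since the paper defines right simple as $aS=S$ for all $a\in S$ (Theorem~\ref{1.1}(a)).
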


We will now collect several equivalent conditions for a semigroup to be a right group, most of them well known, and we will provide a sketch of the proof for convenience of the reader. We will emphasize some machinery regarding the algebraic-theoretic features of right groups that will be helpful in the rest of the paper.

\begin{theorem}\label{1.1}  {\rm \cite[Section~1.11, Theorem~1.27]{CPI}} The following assertions on a semigroup $S\ne\emptyset$ are equivalent:

{\rm (a)} $S$ is right simple (that is, $aS=S$ for all $a\in S$) and left cancellative.

{\rm (b)} For every $a,b\in S$ there exists a unique element $x\in S$ such that $ax=b$.

{\rm (c)} $S$ is right simple and contains an idempotent.

{\rm (d)} $S$ is isomorphic to the external direct product of a non-empty right zero semigroup $E$ and a group $G$.

{\rm (e)} There exists an element $e\in S$ such that: {\rm (1)}  $e$ is a left identity  for $S$, and {\rm (2)} every element of $S$ has a right inverse with respect to $e$.

{\rm (f)} {\rm (1)} $S$ has a left identity, and {\rm (2)} for every left identity $e$ of $S$ and every element $a\in S$, $a$ has a right inverse with respect to $e$.\end{theorem}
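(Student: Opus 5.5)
I will organize the proof as a cycle of implications, using Lemma~\ref{1.2} and the observations on left identities and centralizers from Subsection~\ref{RG}. The order is (a)$\Rightarrow$(b)$\Rightarrow$(c)$\Rightarrow$(f)$\Rightarrow$(e)$\Rightarrow$(d)$\Rightarrow$(a).

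The first implications are short.

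\textbf{(a)$\Rightarrow$(b).} Existence of $x$ with $ax=b$ is right simplicity, and uniqueness is left cancellation.

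\textbf{(b)$\Rightarrow$(c).} Right simplicity is immediate. To find an idempotent, fix $a$ and choose $e$ with $ae=a$. Then $a(ee)=ae=a$, so uniqueness in (b) gives $ee=e$.

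\textbf{(c)$\Rightarrow$(f).} By Lemma~\ref{1.2}, every idempotent is a left identity, so $S$ has a left identity. Conversely, any left identity $e$ satisfies $ee=e$, hence is idempotent. For any left identity $e$ and any $a$, right simplicity gives $aS=S\ni e$, so $a$ has a right inverse with respect to $e$.

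\textbf{(f)$\Rightarrow$(e).} This is trivial.

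\textbf{(d)$\Rightarrow$(a).} Compute in $E\times G$. Given $(f,g)$ and $(f',g')$, the element $(f',g^{-1}g')$ solves $(f,g)(x,h)=(f',g')$, so $S$ is right simple. Left cancellation follows because $(f,g)(x,h)=(x,gh)$ determines $(x,h)$.

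\textbf{(e)$\Rightarrow$(d).} This is the main step. With $e$ as in (e), I first show that $S$ is left cancellative and right simple. For right simplicity: given $a,b$, choose $a'$ with $aa'=e$; then $a(a'b)=eb=b$. For left cancellation: suppose $ax=ay$ and take $a'$ with $aa'=e$. Then I need a left inverse of $a$. Take $a''$ with $a'a''=e$. Then
\[
a'a=a'a\,e=a'a\,a'a''=a'(aa')a''=a'ea''.
\]
This is the delicate point, since $e$ is only a left identity. I will instead show $a''=ea''$ relates to $a$: we have $a=ea=(aa')a$, and $e a''=(aa')a''=a(a'a'')=ae$. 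Hence
\[
a'a\,e=a'ae=a'(ae)=a'ea''=a'a''=e,
\]
using $ea''=a''$. Then $a'ae=e$, and I expect to deduce cancellation by multiplying $ax=ay$ on the left by $a'$ and using $ae=ea''$ to reach $ex=ey$, that is $x=y$. Checking these manipulations carefully is where I expect the real work to lie.

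Once $S$ is right simple and left cancellative, I build the decomposition. Let $E=L$ be the set of left identities; it is nonempty and, as noted in Subsection~\ref{RG}, it is a right zero semigroup. Let $G=Se$. This is a monoid with identity $e$ in which every element has a right inverse, hence it is a group; it equals $C_S(e)$.

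The map $S\to E\times G$ is $s\mapsto(f_s,se)$, where $f_s$ is the unique left identity with $sf_s=s$ (unique by left cancellation). To see $f_s$ exists, solve $sx=s$; then $x$ is idempotent, hence a left identity by Lemma~\ref{1.2}. The inverse map is $(f,g)\mapsto gf$. To verify the homomorphism property I need $f_{st}=f_t$, which holds since $stf_t=st$. I also need $(st)e=(se)(te)$, which holds because $ete=te$ ($e$ is a left identity), giving $ste=s(ete)=(se)(te)$. Bijectivity is routine: $sef_s=sf_s=s$ because $ef_s=f_s$.
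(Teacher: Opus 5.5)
Your argument is correct once one step is cleaned up, and it is organized differently from the paper's proof.

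\textbf{The step to fix.} The problem is in the left-cancellation part of (e)$\Rightarrow$(d). The first display there is not merely delicate but false, because $a'a=a'ae$ fails in general. For example, in the right zero semigroup $\{e,f\}$ with $a=f$ you get $a'=e$ and $a'a=f$, but $a'ae=e$. Delete that display.

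The computation that follows is correct: $a''=ea''=(aa')a''=ae$, hence $a'ae=a'a''=e$. The step you left as an expectation takes one line: $a'ax=a'a(ex)=(a'ae)x=ex=x$ for every $x\in S$. So $ax=ay$ gives $x=a'ax=a'ay=y$.

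There is an even shorter route. Since $(a'a)(a'a)=a'(aa')a=a'ea=a'a$, the element $a'a$ is idempotent. You have already proved right simplicity, so Lemma~\ref{1.2} makes $a'a$ a left identity, and $a''$ is not needed at all.

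\textbf{Comparison with the paper.} The paper does not prove (a)$\Leftrightarrow$(b)$\Leftrightarrow$(c)$\Leftarrow$(d); it cites Clifford--Preston for them. It proves (c)$\Rightarrow$(d) using $G=Se_0$ and the map $(g,f)\mapsto gf$ from $G\times E$. It closes the loop with the almost trivial (e)$\Rightarrow$(c): right simplicity from $s=es=aa^{-1}s$, together with $e=ee$.

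Your single cycle (a)$\Rightarrow$(b)$\Rightarrow$(c)$\Rightarrow$(f)$\Rightarrow$(e)$\Rightarrow$(d)$\Rightarrow$(a) is self-contained and imports only Lemma~\ref{1.2}. The price is that you go from (e) directly to (d). Your construction of $f_s$ relies on uniqueness of solutions of $sx=s$, so you must extract left cancellation from (e) by hand; that is exactly the extra computation above.

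The decomposition itself is the paper's. Your map $s\mapsto(f_s,se)$ is the inverse of $(g,f)\mapsto gf$ up to the order of the factors, and your $L$ coincides with $E(S)$. A minor advantage of your version is that (c)$\Rightarrow$(f) uses only $e\in aS$. The paper instead appeals to (b), which is available to it only through the citation.
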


The semigroups satisfying the previous equivalent conditions are called {\em right groups.} 

\begin{proof} (a)${}\Leftrightarrow{}$(b)${}\Leftrightarrow{}$(c)${}\Leftarrow{}$(d).	See  \cite[Section 1.11, Theorem 1.27]{CPI}.

(c)${}\Rightarrow{}$(d).
	This immediately follows from Lemma \ref{1.2}. Let $E$ be the set of all idempotents of $S$. By Lemma~\ref{1.2} every element of $E$ is a left identity for $S$, that is, $ea=a$ for every $e\in E$ and every $a\in S$. In particular, $E$ is a non-empty subsemigroup of $S$ and is a right zero semigroup. Then one fixes any element $e_0\in E$ and takes for $G$ the left  ideal $Se_0$, which turns out to be a group with identity $e_0$. Then it is easy to check that the mapping $G\times E\to S$ defined by $(a,e)\mapsto ae$ for every $(a,e)\in G\times E$ is a semigroup isomorphism between the external direct product $G\times E$ and the semigroup $S$. That is, every $Se_0$ is a complement of $E$ in $S$. This proves (d).
	
(c)${}\Rightarrow{}$(f).  Assume that (c) holds for the semigroup $S$. By condition (c) and \cite[Lemma 1.26]{CPI}, $S$ has a left identity. Suppose now that $f$ is any left identity of $S$ and let $a\in S$. By condition (b), the equation $ax=f$ has a unique solution and, by definition, such a solution is a right inverse of $a$ with respect to $f$. This proves that (c)${}\Rightarrow{}$(f).  

(f)${}\Rightarrow{}$(e) is trivial. 

(e)${}\Rightarrow{}$(c). Let us suppose that (e) holds, let $e$ be a left identity of $S$ and, for every $a\in S$, let $a^{-1}\in S$ denote a right inverse of $a$ with respect to $e$. Given elements $a,s\in S$, we have $s=es=aa^{-1}s$. This proves that $aS=S$ for all $a\in S$, that is, $S$ is right simple. Furthermore, $e$ is trivially an idempotent of $S$. The conclusion is now clear. 
\end{proof}

\begin{remark}\label{sim}
{\em The projection $\pi_G\colon S\to G$, its kernel $\sim$, and the quotient group $S/{\sim}$. }{\rm In Theorem~\ref{1.1}(d), the
projections $\pi_G\colon S\to G$ and $\pi_E\colon S\to E$ are defined by $x\mapsto (x^{-1})^{-1}$ and $x\mapsto  (x^{-1})x$ respectively. Here $x^{-1}$ denotes the right inverse of $x\in S$ with respect to a fixed element $e_0\in E$ (i.e., $x^{-1}$ is the unique element in $S$ such that $x(x^{-1})=e_0$.) By associativity and considering $x(x^{-1})(x^{-1})^{-1}$, one sees that $e_0(x^{-1})^{-1}=xe_0$. Since all elements of $E$ are left identities in $S$, it follows that $(x^{-1})^{-1}=xe_0$. The projection $\pi_G$ is a semigroup morphism, because, for every $x,y\in S$, we  have that $xe_0ye_0=xye_0$. The kernel of $\pi_G$ is the congruence $\sim$ on $S$ defined, for every $x,y\in S$, by $x\sim y$ if $  xe_0=ye_0$. Notice that this congruence $\sim$ does not depend on the choice of the idempotent element $e_0\in E$, because for any other $f\in E$ one has that, for all $x,y\in S$, $xe_0=ye_0$ if and only if $xf=yf$ (because multiplying $xe_0=ye_0$ by $f$ on the right we get that $xe_0f=ye_0f$, that is $xf=yf$; similarly, multiplying by $e_0$ on the right, $xf=yf$ implies $xe_0=ye_0$).  The congruence class modulo $\sim$ of every element  $x\in S$ is $xE$. }

\smallskip

{\em The projection $\pi_E\colon S\to E$, its kernel $\equiv$, and the quotient right zero semigroup $S/{\equiv}$. }{\rm As we have said in the previous paragraph, the
projection $\pi_E\colon S\to E$ is defined by $x\mapsto  (x^{-1})x$. The image $\pi_E(x)=(x^{-1})x$ of $x\in S$ is an idempotent element of $S$, because $((x^{-1})x)^2=x^{-1}xx^{-1}x=x^{-1}e_0x=x^{-1}x$. We have that $\pi_E(x)=(x^{-1})x$ is the unique element $f\in E$ such that $xf=x$. (This characterization also immediately shows that $f=\pi_E(s)$ belongs to $E$, because $sff=sf=s$. It also shows that the projection $\pi_E$, like the congruence $\sim$, does not depend on the choice of the fixed element $e_0$.) The projection $\pi_E$ is a semigroup morphism, because, for every $x,y\in S$, if $f$ is the unique idempotent such that $yf=y$, then also $xyf=xy$, so that $\pi_E(xy)=f=\pi_E(x)\pi_E(y)$. The kernel of the projection  $S\to E$ is the congruence relation $\equiv$ on $S$ for which the quotient semigroup $S/{\equiv}\,$ is $\{\,Se\mid e\in E\,\}$. Notice that $Ge=Se_0e=Se$ for every $ e\in E$. Therefore 
every right group $S$ has a partition $\{\,Se\mid e\in E\,\}$, where every block $Se$ of the partition is a group with identity $e$, so that $E$ is the set of all the identities of these groups $Se$. Moreover all the groups $Se$ are pair-wise isomorphic via the group isomorphism $r_f\colon Se\to Sf$, $r_f(x)=xf$ for all $e,f\in E$, given by right multiplication by $f$.

\smallskip

Now that we have the two congruences $\sim$ and $\equiv$ on any right group $S$, it is clear that the product decomposition of $S$ corresponding to the pair $(\sim,\equiv)$ in the sense of Definition~\ref{2.2} is the product decomposition of $S$ as a right zero semigroup $E$ and a group $G$ of Theorem~\ref{1.1}.(d)}\end{remark}

\begin{Ex} {\rm Let us give an example concerning the equivalent statements of Theorem~\ref{1.1}. Let $S$ be any nonempty set, and consider the semigroup $(S,\pi_2)$, which is obviously a right zero semigroup. Then $S$ is right simple, because $a\,\pi_2\,S=S$ for all $a\in S$; and $S$ is left cancellative, because $a\,\pi_2\,b=a\,\pi_2\,c$ is $b=c$. For every $a,b\in S$ there exists a unique element $x\in S$ such that $ax=b$, it is $b$. Every element of $S$ is
 idempotent. Every element of $S$ is a left identity. For any two elements $e,a\in S$, the right inverse of $a$ with respect to $e$ is $e$. }\end{Ex}
 
\section{Categories of right groups}\label{Crg}

The {\em category of group actions on sets} has, as objects, all triplets  $(G,X, \varphi)$, where $G$ is a group, $X$ is a set, and $\varphi\colon G\to \Aut_{\Set}(X)$, $g\mapsto \varphi_g$, is a group morphism. Clearly $\Aut_{\Set}(X)$ is the symmetric group $\Sym_X$. A morphism
$(G,X,  \varphi)\to (H,Y,\psi)$ is a pair $(\Phi, f)$, where $\Phi\colon G\to H$ is a group morphism, $f\colon X\to Y$ is a mapping, and $\psi_{\Phi(g)}(f(x))=f(\varphi_g(x))$ for every $g\in G$ and $x\in X$. It is immediately seen that if $(\Phi,f):(G,X, \varphi)\to (H,Y,\psi)$ is a morphism in the category of group actions on sets, then $(\Phi,f)$ is an isomorphism if and only if $\Phi$ is an isomorphism of groups and $f$ is a bijection. \\
If $\mathds{1}:G\to\Aut_{\Set}(X)$ is the trivial group morphism, we will say that $(G,X,\mathds{1})$ is the trivial group action of $G$ on $X$.

Similarly, the {\em category of group actions on pointed sets} has, as objects, all 4-tuples  $(G,X, x_0, \varphi)$, where $G$ is a group, $X$ is a set, $x_0$ is a fixed element of $X$, (so that $(X,x_0)$ is a pointed set), and $\varphi\colon G\to \Aut_{\Set_*}(X,x_0)$ is a group morphism. Here $\Aut_{\Set_*}(X,x_0)$ is simply the group of all permutations of $X$ that fix $x_0$, that is, all bijections $f\colon X\to X$ such that $f(x_0)=x_0$. A morphism
$(G,X, x_0, \varphi)\to (H,Y,y_0,\psi)$ is a pair $(\Phi, f)$, where $\Phi\colon G\to H$ is a group morphism, $f\colon X\to Y$ is a mapping, $f(x_0)=y_0$, and $\psi_{\Phi(g)}(f(x))=f(\varphi_g(x))$ for every $g\in G$ and $x\in X$. 

The {\em category of pointed right groups} has as objects the pairs $(S,e_0)$, where $S$ is a right group and  $e_0$ is an idempotent element of $S$, and as morphisms $f\colon (S,e_0)\to (S', e'_0)$ all semigroup morphisms $f\colon S\to S'$ such that $f(e_0)=e'_0$.

Notice that {\em pointed} right groups $(S,\cdot, e_0, {}^{-1})$ do form a variety of algebras, where $\cdot$ is binary and associative, $e_0$ is $0$-ary, ${}^{-1}$ is unary, $e_0x=x$ for every $x\in S$, and $xx^{-1}=e_0$  for every $x\in S$ (Theorem~\ref{1.1}(e)). 
Also
right groups, viewed as algebras $(S,\cdot,\backslash)$ with two binary operations~$\cdot$ and $\backslash$ and the three identities $x\cdot(y\cdot z)=(x\cdot y)\cdot z$, \ $x\cdot (x\backslash y)=y$ and $x\backslash(x\cdot y)=y$,  form a variety in the sense of Universal Algebra. 

\begin{proposition}\label{coproduct} If $S$ is a right group, $e\in E:=E(S)$, $\iota_e\colon Se\to S$ and $\iota_E\colon E\to S$ are the inclusions, $r_e\colon S\to Se$ is defined by $r_e(s)=se$ for every $s\in S$, and $\pi_E\colon S\to E$ is defined by ``$\pi_E(s)$ is the unique element of $S$ such that $s\pi_E(s)=s$'', then the decomposition $S=Se \times E$ shows that: 

{\rm (1)} $(S, r_e,\pi_E)$ is the product of $Se$ and $E$ in the category of semigroups.

{\rm (2)} $((S,e),\iota_e,\iota_E)$ is the coproduct of $(Se,e)$ and $(E,e)$ in the category of pointed right groups.

{\rm (3)} $(S,\iota_e,\iota_E)$ is not the coproduct of $Se$ and $E$ in the category of semigroups.\end{proposition}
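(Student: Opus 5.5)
For (1), I will use Remark~\ref{sim}. There the map $r_e\colon S\to Se$ (that is, $\pi_G$ with $e_0=e$) and $\pi_E$ were shown to be semigroup morphisms. Moreover, $(r_e,\pi_E)\colon S\to Se\times E$ is a bijection, inverse to the isomorphism $(a,f)\mapsto af$ of Theorem~\ref{1.1}. Indeed, for $s\in S$ we have $r_e(s)\pi_E(s)=se\pi_E(s)=s\pi_E(s)=s$, using $ef=f$ for idempotents. Conversely, for $a\in Se$ and $f\in E$ we get $r_e(af)=afe=ae=a$, since $fe=e$ as $E$ is right zero. We also get $\pi_E(af)=f$, because $af\cdot f=af$ and $\pi_E(af)$ is unique. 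So $S$, with these projections, is isomorphic to the external product, which is the categorical product in semigroups.

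For (2), take pointed right groups $(T,t_0)$ and morphisms $\alpha\colon (Se,e)\to(T,t_0)$ and $\beta\colon(E,e)\to(T,t_0)$. The only possible candidate is $\gamma(s)=\alpha(r_e(s))\beta(\pi_E(s))$, since $s=r_e(s)\pi_E(s)$ and $\gamma$ must restrict to $\alpha$ and $\beta$. This gives uniqueness.

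For existence I need the following facts.
\begin{itemize}
\item[(i)] $\gamma(e)=\alpha(e)\beta(e)=t_0t_0=t_0$, and $\gamma$ restricts correctly: for $a\in Se$, $\gamma(a)=\alpha(a)\beta(e)=\alpha(a)t_0$, and this equals $\alpha(a)$ because $\alpha(a)=\alpha(ae)=\alpha(a)t_0$. For $f\in E$, $\gamma(f)=\alpha(fe)\beta(f)=\alpha(e)\beta(f)=t_0\beta(f)=\beta(f)$, since $t_0$ is a left identity by Lemma~\ref{1.2}.
\item[(ii)] $\gamma$ is multiplicative. This is the main step. Write $s=af$ and $s'=a'f'$. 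Then $ss'=afa'f'=aa'f'$, since $f$ is a left identity. So $r_e(ss')=aa'$ and $\pi_E(ss')=f'$, and $\gamma(ss')=\alpha(a)\alpha(a')\beta(f')$. On the other side, $\gamma(s)\gamma(s')=\alpha(a)\beta(f)\alpha(a')\beta(f')$. Now $\beta(f)$ is idempotent in $T$, hence a left identity of $T$, so $\beta(f)\alpha(a')=\alpha(a')$ and the two sides agree.
\end{itemize}
The key point is that idempotents of $T$ act as left identities, which is exactly where the right-group hypothesis on $T$ is used.

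For (3), I would exhibit a failure of the coproduct property in semigroups. Take $S=G\times E$ with $|E|\ge2$. Let $T$ be a semigroup in which idempotents are not left identities, for instance $T=(E,\pi_1)$, the left zero semigroup on the set $E$. Take $\alpha\colon Se\to T$ constant at $e$, and $\beta=\id\colon E\to T$; $\beta$ is a morphism because $\pi_2$ on $E$ and $\pi_1$ on $T$ agree on idempotent-only data? No, this needs checking: $\beta(fg)=\beta(g)=g$ but $\beta(f)\beta(g)=f$. So instead I will use the product semigroup or a semigroup with zero.

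A cleaner approach uses the two-element left zero semigroup $T=\{u,v\}$ with $xy=x$. Any map from $E$ into $T$ must satisfy $\beta(g)=\beta(f)$ for all $f,g$, so $\beta$ is constant, and this does not work either.

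Instead take $T=E$ itself with the right zero operation. Let $\alpha\colon Se\to E$ be the constant map $e$ and $\beta=\id_E$. If $\gamma$ extended both, then for $f\ne e$ we would have $f=\gamma(f)=\gamma(e\cdot f)$ and also $e=\gamma(e)$. Here I need a product whose value forces a contradiction. Note $\gamma(f\cdot e)=\gamma(e)=e$, while $\gamma(f)\gamma(e)=fe=e$, which is consistent. So I will take $T=E\times\{0,1\}$ with a zero-type twist, or more simply the trivial group case $G=1$.

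In the case $G=1$ we have $S=E$ and $Se=\{e\}$. The coproduct in semigroups of the trivial semigroup $\{e\}$ and $E$ is the free product, which is infinite when $E$ is nonempty: it contains the element $ee'$ of length~$2$ for $e'\ne e$, and its elements alternate between the two factors. It therefore cannot be isomorphic to $E$ compatibly with the inclusions, since the inclusions into the free product are not jointly surjective. This is the hardest point. I would make it rigorous by mapping to $T=$ the free semigroup on $E$ modulo the relations of $E$ only: with $\alpha(e)=$ a fresh idempotent $z$, the two morphisms $\gamma$ would have to send $e=e\cdot e$ to both $z$ and $\beta(e)$, which are distinct, giving a contradiction.
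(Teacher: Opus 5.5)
Your arguments for (1) and (2) are correct and essentially the paper's. The paper also uses the isomorphism $s\mapsto(se,\pi_E(s))$ with inverse $(x,y)\mapsto xy$. In (2) it uses the same formula $\psi(s)=f_e(se)f_E(\pi_E(s))$, with Lemma~\ref{1.2} applied inside $T$ exactly where you apply it.

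Part (3), however, is not proved. The statement asserts that for \emph{every} right group $S$ and every $e\in E(S)$ the triple $(S,\iota_e,\iota_E)$ fails to be a coproduct in semigroups. Your final argument only treats the special case $G=1$, i.e.\ $S=E$ and $Se=\{e\}$. Even there, the target semigroup is not well specified: ``the free semigroup on $E$ modulo the relations of $E$ only'' is just $E$, and the ``fresh idempotent $z$'' is not in it. The facts you then need are also asserted rather than proved: that $z\neq e$ in the intended free product, or the normal form that makes the injections not jointly surjective.

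The missing observation is one you almost state in your last sentence. Since $e$ lies in both $Se$ and $E$, any $\gamma$ with $\gamma\iota_e=\alpha$ and $\gamma\iota_E=\beta$ must satisfy $\alpha(e)=\gamma(e)=\beta(e)$. In the category of semigroups nothing forces $\alpha(e)=\beta(e)$, because every constant map whose value is an idempotent is a semigroup morphism. So take any semigroup $T$ with two distinct idempotents $u\neq v$. Let $\alpha\colon Se\to T$ and $\beta\colon E\to T$ be the constant morphisms with values $u$ and $v$. Then no such $\gamma$ exists, for every right group $S$.

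In fact the two-element left zero semigroup $\{u,v\}$ that you discarded already works. You correctly found that every morphism $E\to\{u,v\}$ is constant. But constant morphisms with \emph{different} values on the two factors are exactly what is needed.

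This is the paper's argument. It uses $T=W/{\simeq}$, generated by two distinct idempotents $[x]_\simeq$ and $[y]_\simeq$, though any semigroup with two distinct idempotents would do. It needs no information about free products.
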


\begin{proof} The mapping $\varphi\colon S\to Se\times E$ in the external direct product $Se\times E$ of the homomorphic images $Se$ and $E$ of $S$, defined by $\varphi(s)=(se,\pi_E(s))$ for every $s\in S$, is a semigroup isomorphism, because $$\begin{array}{l}\varphi(ss')=(ss'e,\pi_E(ss'))=(ses'e,\pi_E(s'))= \\  \quad\quad =(se,\pi_E(s))(s'e,\pi_E(s'))=\varphi(s)\varphi(s'),\end{array}$$ and the inverse of $\varphi$ is the mapping $\varphi^{-1}\colon Se\times E\to S$ defined by $\varphi^{-1}(x,y)=xy$ for every $(x,y)\in Se\times E$.

In order to prove (1), fix any semigroup $T$ and semigroup morphisms $f_e\colon T\to Se$ and $f_E\colon T\to E$. Define a mapping $f\colon T\to S$ setting $f(t)=f_e(t)f_E(t)$ for every $t\in T$. Then $f$ is a semigroup morphism, because, for every $t,t'\in T$, we have that $$\begin{array}{l} f(t)f(t')=f_e(t)f_E(t)f_e(t')f_E(t')= \\ \quad\quad =f_e(t)f_e(t')f_E(t')=f_e(tt')f_E(tt')=f(tt').\end{array}$$ Also $r_ef=f_e$ and $\pi_E f=f_E$, because, for every $t\in T$, we have that $r_ef(t)=f_e(t)f_E(t)e=f_e(t)e=f_e(t)$ and $\pi_E f(t)=\pi_E(f_e(t)f_E(t))=f_E(t)$. 

As far as uniqueness of $f$ is concerned, let $g\colon T\to S$ be any other semigroup morphism such that $r_eg=f_e$ and $\pi_E g=f_E$. Then, for every $t\in T$ we have that $g(t)=\varphi^{-1}\varphi g(t)=\varphi^{-1}(g(t)e,\pi_E(g(t)))=\varphi^{-1}(r_eg(t),\pi_Eg(t))=\varphi^{-1}(f_e(t),f_E(t))=f_e(t)f_E(t)=f(t)$, so that $g=f$.

As far as (2) is concerned, let $(T,e_T)$ be a pointed right group and\linebreak  $f_e\colon Se\to T$ and $f_E\colon E\to T$ be two semigroup morphisms with $f_e(e)=f_E(e)=e_T$. Define a mapping $\psi\colon S\to T$ setting $\psi(s)=f_e(se)f_E(\pi_E(s))$ for every $s\in S$. Notice that all elements of $E$ are idempotent, so that every element of the image $f_E(E)$ is an idempotent element of $T$. By Lemma~\ref{1.2}, every element of $f_E(E)$ is a left identity for $T$. The mapping $\psi$ is a semigroup morphism, because for every $s,s'\in S$ we have that $$\begin{array}{l} \psi(ss')=f_e(ss'e)f_E(\pi_E(ss'))=\\ \quad\quad =f_e(ses'e)f_E(\pi_E(s'))=  f_e(se)f_e(s'e)f_E(\pi_E(s'))=\\ \quad\quad =f_e(se)f_E(\pi_E(s))f_e(s'e)f_E(\pi_E(s'))=\psi(s)\psi(s').\end{array}$$ Moreover, for every $s\in S$ we have that $$\psi\iota_e(se)=\psi(se)=f_e(se)f_E(e)=f_e(se)f_e(e)=f_e(se),$$ so that $\psi\iota_e=f_e$. Similarly, 
$$\psi\iota_E(e')=\psi(e')=f_e(e'e)f_E(\pi_E(e'))=f_e(e)f_E(e')=e_Tf_E(e')=f_E(e'),$$ hence $\psi\iota_E=f_E$.

For uniqueness, let $\psi'\colon S\to T$ be any other semigroup morphism such that $\psi'\iota_e=f_e$ and $\psi'\iota_E=f_E$. Then, for every $s\in S$ we have that $s=(se)\pi_E(s)$ with $se\in Se$ and $\pi_E(s)\in E$, so $$\psi'(s)=\psi'(se)\psi'(\pi_E(s))=\psi'\iota_e(se)\psi'\iota_E(\pi_E(s))=f_e(se)f_E(\pi_E(s))=\psi(s).$$ Therefore 
$\psi'=\psi$.

For (3), let $A=\{x,y\}$ be a set of two elements and $W$ the free semigroup freely generated by the set $A$, so that $W$ is the semigroup of all words of length $\ge1$ in the alphabet $A$ with respect to justapposition. Let $\simeq$ be the congruence on $W$ generated by the set of the two pairs $(x,xx)$ and $(y,yy)$. Set $T:=W/{\simeq}$, so that its two elements $[x]_\simeq$ and $[y]_\simeq$ are two distinct idempotents. Consider the constant semigroup morphisms $f_e\colon Se\to T$ and $f_E\colon E\to T$ defined by $f_e(se)=[x]_\simeq$ and $f_e(e')=[y]_\simeq$ for every $se\in Se$ and $e'\in E$. Then there is no mapping $\omega\colon S\to T$ such that $\omega\iota_e=f_e$ and $\omega\iota_E=f_E$, because $\omega(e)=\omega\iota_e(e)=f_e(e)=[x]_\simeq$  and $\omega(e)=\omega\iota_E(e)=f_E(e)=[y]_\simeq$, a contradiction. \end{proof}

The category of pointed right groups is a category with a zero object and, by Proposition~\ref{coproduct}, $(S,r_e,\pi_E,\iota_e,\iota_E)$ is a biproduct.

\begin{corollary}\label{coproduct'} If $S$ is a right group, $\pi_\sim\colon S\to S/{\sim}$ is defined by $\pi_E(s)=sE$ for every $s\in S$, and $\pi_E\colon S\to E$ is defined by ``$\pi_E(s)$ is the unique element of $S$ such that $s\pi_E(s)=s$'', then  $(S, \pi_{\sim},\pi_E)$ is the product of $S/{\sim}$ and $E$ in the category of semigroups.\end{corollary}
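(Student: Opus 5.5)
The plan is to deduce the corollary from Proposition~\ref{coproduct}(1) by transporting the product structure along an isomorphism $Se\cong S/{\sim}$. Here $\pi_\sim(s)=sE$ is the class of $s$ modulo $\sim$, as in Remark~\ref{sim}, so the typo $\pi_E(s)=sE$ in the statement should be read as $\pi_\sim(s)=sE$. Fix $e\in E$. By Remark~\ref{sim}, $x\sim y$ if and only if $xe=ye$. Hence $\pi_\sim$ and $r_e\colon S\to Se$ are surjective semigroup morphisms with the same kernel $\sim$. So there is a unique semigroup isomorphism $\theta\colon S/{\sim}\to Se$ with $\theta\pi_\sim=r_e$, namely $\theta(sE)=se$. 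It is well defined and injective precisely because $sE=s'E$ iff $se=s'e$, and it is a morphism because $\pi_\sim$ is surjective and both maps are morphisms.

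Next I will show that the triple $(S,\pi_\sim,\pi_E)$ satisfies the universal property. Let $T$ be a semigroup, and let $g\colon T\to S/{\sim}$ and $f_E\colon T\to E$ be morphisms. Set $f_e:=\theta g\colon T\to Se$. By Proposition~\ref{coproduct}(1) there is a unique morphism $f\colon T\to S$ with $r_ef=f_e$ and $\pi_Ef=f_E$; explicitly $f(t)=f_e(t)f_E(t)$. Then
$$\pi_\sim f=\theta^{-1}r_ef=\theta^{-1}\theta g=g.$$
For uniqueness, suppose $f'$ satisfies $\pi_\sim f'=g$ and $\pi_Ef'=f_E$. Then $r_ef'=\theta\pi_\sim f'=\theta g=f_e$, so $f'=f$ by the uniqueness part of Proposition~\ref{coproduct}(1).

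This is really just the general fact that a product cone composed with an isomorphism on one leg is again a product cone. The only point needing care is the first step: checking that $\ker r_e$ equals $\sim$, so that $\theta$ exists and is bijective. That is exactly the content of Remark~\ref{sim}, so I expect no genuine obstacle.
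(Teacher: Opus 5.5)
Your proposal is correct and follows the paper's argument: the paper deduces the corollary from the isomorphism $\overline{r_e}\colon S/{\sim}\to Se$ (your $\theta$) together with Proposition~\ref{coproduct}(1), and you have written out in full how the product structure is transported along that isomorphism. You were also right to read the statement's ``$\pi_E(s)=sE$'' as a typo for $\pi_\sim(s)=sE$.
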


The proof follows from the isomorphism $\overline{r_e}\colon S/{\sim}\to Se$ and Proposition~\ref{coproduct}(1).

\smallskip

The difference between right groups and pointed right groups is not stated so explicitly in \cite{Kinyon}.

\begin{theorem}\label{2.7} There is a faithful, essentially surjective functor from the category of group actions on pointed sets to the category of pointed right groups.\end{theorem}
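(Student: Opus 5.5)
Given an object $(G,X,x_0,\varphi)$ of the category of group actions on pointed sets, I will take the external direct product $F(G,X,x_0,\varphi):=(X\times G,(x_0,1_G))$, where $X$ carries the right zero operation $\pi_2$. By Theorem~\ref{1.1}(d) this is a right group. Its idempotents are exactly the pairs $(x,1_G)$, so $(x_0,1_G)$ is an idempotent and we obtain a pointed right group. The action $\varphi$ is deliberately not used in the multiplication; it only constrains which pairs of maps are morphisms. This is reasonable, since the statement asks only for faithfulness and essential surjectivity, not fullness.

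On a morphism $(\Phi,f)\colon(G,X,x_0,\varphi)\to(H,Y,y_0,\psi)$ I will set $F(\Phi,f):=f\times\Phi\colon X\times G\to Y\times H$, $(x,g)\mapsto(f(x),\Phi(g))$. This map is a semigroup morphism: the second component is a group morphism, and the first component is a morphism of right zero semigroups, because every mapping between right zero semigroups is a semigroup morphism. It sends $(x_0,1_G)$ to $(y_0,1_H)$ because $f(x_0)=y_0$ and $\Phi(1_G)=1_H$. Preservation of identities and of composition is immediate from the componentwise definition, so $F$ is a functor.

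Faithfulness is also immediate. If $f\times\Phi=f'\times\Phi'$, then evaluating at $(x,1_G)$ gives $f=f'$, and evaluating at $(x_0,g)$ gives $\Phi=\Phi'$.

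For essential surjectivity, let $(S,e_0)$ be a pointed right group. By Remark~\ref{sim} and Proposition~\ref{coproduct}, the map $s\mapsto(\pi_E(s),se_0)$ is a semigroup isomorphism $S\to E\times Se_0$, with $E=E(S)$ right zero and $Se_0$ a group with identity $e_0$. It sends $e_0$ to $(e_0,e_0)$, since $\pi_E(e_0)=e_0$. Hence $(S,e_0)\cong F(Se_0,E,e_0,\mathds{1})$, using the trivial action, and this is an isomorphism of pointed right groups.

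There is no real obstacle. The only delicate point is conceptual: one must check that ignoring $\varphi$ in the construction is legitimate, which it is, because morphisms of actions are in particular pairs $(\Phi,f)$. One should also remark, as the authors presumably do afterwards, that $F$ is not full, since $f\times\Phi$ is a morphism even when $(\Phi,f)$ fails to be equivariant.
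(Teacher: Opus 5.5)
Your proof is correct. It follows the paper's outline (the same formula $f\times\Phi$ on morphisms, the same faithfulness argument, essential surjectivity via the trivial action), but your functor differs from the paper's on objects.

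The paper sends $(G,X,x_0,\varphi)$ to the twisted semigroup $(X\times G)_\varphi$, with multiplication $(x,g)(x',g')=(\varphi_g(x'),gg')$. There the action enters the structure, and the equivariance condition $\psi_{\Phi(g)}(f(x))=f(\varphi_g(x))$ is exactly what makes $f\times\Phi$ a semigroup morphism. You instead take the untwisted direct product and discard $\varphi$. So $f\times\Phi$ is a morphism for free, and your functor visibly factors as the faithful, surjective-on-objects forgetful functor $(G,X,x_0,\varphi)\mapsto((X,x_0),G)$ followed by the direct-product functor $\Set_*\times\Grp\to\RGrp_*$. You are also more careful than the paper in checking that $s\mapsto(\pi_E(s),se_0)$ sends $e_0$ to the base point.

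The two constructions are less different than they look. The maps $\eta(x,g)=(\varphi_g(x),g)$ of Proposition~\ref{right-group-iso-F} are isomorphisms $X\times G\to(X\times G)_\varphi$ that fix $(x_0,1_G)$. For a morphism $(\Phi,f)$, the naturality square $(f\times\Phi)\circ\eta=\eta\circ(f\times\Phi)$ commutes precisely because $f(\varphi_g(x))=\psi_{\Phi(g)}(f(x))$. Hence your functor is naturally isomorphic to the paper's.

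The paper's version buys a construction in which the action is visible in the multiplication. Yours buys a shorter verification. It also makes non-fullness immediate: the identity of $X\times G$ is a morphism $F(G,X,x_0,\varphi)\to F(G,X,x_0,\mathds{1})$ that comes from no equivariant pair when $\varphi$ is nontrivial.
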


\begin{proof} Associate to any group action $(G,X, x_0, \varphi)$ on the pointed set $(X,x_0)$ the semigroup $(X\times G)_\varphi:=X\times G$ with the operation defined by $$(x,g)(x',g')=(\varphi_g(x'),gg')$$ for all $(x,g),(x',g')\in X\times G$. It is very easy to check that this semigroup $X\times G$ is a right group. It is pointed relatively to its idempotent element $(x_0,1_G)$.  Given any morphism $(G,X, x_0, \varphi)\to (H,Y,y_0,\psi)$ in the category of left group actions on pointed sets, we have that the morphism is a pair $(\Phi, f)$ with $\Phi\colon G\to H$ a group morphism, $f\colon X\to Y$ a mapping, $f(x_0)=y_0$ and $\psi_{\Phi(g)}(f(x))=f(\varphi_g(x))$ for every $g\in G$ and every $x\in X$. It is possible to associate to $(\Phi, f)\colon (G,X,  \varphi)\to (H,Y,\psi)$  the semigroup morphism $F(\Phi, f)=f\times \Phi\colon X\times G\to Y\times H$ defined by $(f\times\Phi)(x,g)=(f(x),\Phi(g))$ for all $x\in X$, $g\in G$. Clearly, the mapping $(\Phi, f)\to F(\Phi, f)=f\times \Phi$ is injective. That is, we have a faithful functor $F$ defined by $F(G,X, x_0,\varphi)=(X\times G, (x_0,1_G))$ and $F(\Phi, f)=f\times \Phi$.

Given any pointed right group $(S,e_0)$, associate to it the 4-tuple\linebreak $(Se_0, E(S), x_0,\mathds{1})$, where $\mathds{1}\colon Se_0\to\Sym_{E(S)}$ is the trivial action, for which $\mathds{1}_{se_0}\colon E(S)\to E(S)$ is the identity mapping of $E(S)$ for every $se_0\in Se_0$.
Then $F(Se_0, E(S), x_0,\mathds{1})=E(S)\times Se_0$, the semigroup with operation $$(x,se_0)(x',s'e_0)=(x',ss'e_0'),$$ so that $F(Se_0, E(S), x_0,\mathds{1})\cong S$. This proves that $F$ is essentially surjective.
 \end{proof}

\begin{theorem}\label{functor2} There is a faithful, essentially surjective functor from the category of group actions on non-empty sets to the category of right groups. \end{theorem}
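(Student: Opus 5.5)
The plan is to imitate the proof of Theorem~\ref{2.7}, dropping base points everywhere. To a group action $(G,X,\varphi)$ with $X\neq\emptyset$ I associate the semigroup $(X\times G)_\varphi$, the set $X\times G$ with operation
$$(x,g)(x',g')=(\varphi_g(x'),gg').$$
To a morphism $(\Phi,f)\colon(G,X,\varphi)\to(H,Y,\psi)$ I associate the mapping $F(\Phi,f)=f\times\Phi$, $(x,g)\mapsto(f(x),\Phi(g))$.

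The first step is to check that $(X\times G)_\varphi$ is a right group.

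\emph{Associativity.} Both $((x,g)(x',g'))(x'',g'')$ and $(x,g)((x',g')(x'',g''))$ equal $(\varphi_{gg'}(x''),gg'g'')$. This uses only that $\varphi$ is a group morphism, so $\varphi_g\varphi_{g'}=\varphi_{gg'}$.

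\emph{Right group.} I verify condition (b) of Theorem~\ref{1.1}. Given $(x,g)$ and $(y,h)$, the equation $(x,g)(x',g')=(y,h)$ means $\varphi_g(x')=y$ and $gg'=h$. Its unique solution is $x'=\varphi_{g^{-1}}(y)$, $g'=g^{-1}h$, since $\varphi_g$ is a bijection. Non-emptiness of $X\times G$ is exactly where the hypothesis $X\neq\emptyset$ enters.

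The second step is functoriality. The identity
$$(f\times\Phi)\big((x,g)(x',g')\big)=(f(\varphi_g(x')),\Phi(g)\Phi(g'))=(\psi_{\Phi(g)}(f(x')),\Phi(g)\Phi(g'))$$
is precisely the compatibility condition in the definition of a morphism of group actions. Hence $f\times\Phi$ is a semigroup morphism. Identities and composites are clearly preserved.

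\emph{Faithfulness.} I recover $(\Phi,f)$ from $f\times\Phi$ by evaluating on pairs $(x,g)$. This needs $X$ and $G$ nonempty, which holds: $G$ is a group and $X\neq\emptyset$ by assumption.

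The third step is essential surjectivity. Given a right group $S$, its set $E=E(S)$ of idempotents is non-empty by Theorem~\ref{1.1}. Fix $e_0\in E$ and take the trivial action $\mathds{1}$ of the group $Se_0$ on $E$. Then $F(Se_0,E,\mathds{1})$ is $E\times Se_0$ with operation
$$(x,g)(x',g')=(x',gg').$$
This is the external direct product of the right zero semigroup $E$ and the group $Se_0$. By Theorem~\ref{1.1}(d), or directly by the isomorphism $(a,e)\mapsto ae$ in its proof, this product is isomorphic to $S$.

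I expect no serious obstacle. The only points needing care are the role of non-emptiness of $X$ (for faithfulness and for $S\neq\emptyset$) and the right-group check, which I handle via (b) as above rather than through idempotents.
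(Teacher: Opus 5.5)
Your proposal is correct and follows the paper's proof: the same functor $(G,X,\varphi)\mapsto (X\times G)_\varphi$, $(\Phi,f)\mapsto f\times\Phi$. You also supply the associativity check, the right-group check via condition (b) of Theorem~\ref{1.1}, and the morphism check, all of which the paper dismisses as easy. The only cosmetic difference is in essential surjectivity: the paper lets the quotient group $S/{\sim}$ act trivially on $E(S)$, whereas you use the isomorphic group $Se_0$ (as the paper itself does in Theorem~\ref{2.7}), which makes the final isomorphism with $S$ follow directly from the proof of Theorem~\ref{1.1}.
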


\begin{proof} The proof is similar to that of Theorem~\ref{2.7}. Define a functor $F$ of the category of left group actions on non-empty sets to the category of right groups as follows. Associate to any left group action $(G,X, \varphi)$ on a non-empty set $X$ the semigroup $(X\times G)_\varphi:=X\times G$  with the operation defined by $(x,g)(x',g')=(\varphi_g(x'),gg')$ for all $(x,g),(x',g')\in X\times G$. It is very easy to check that $X\times G$ with this operation is a right group. Set $F(G,X, \varphi)=(X\times G)_\varphi$. Given any morphism $(G,X,  \varphi)\to (H,Y,\psi)$ in the category of left group actions on non-empty sets, we have that the morphism is a pair $(\Phi, f)$ with $\Phi\colon G\to H$ a group morphism, $f\colon X\to Y$ a mapping, and $\psi_{\Phi(g)}(f(x))=f(\varphi_g(x))$ for every $g\in G$ and every $x\in X$. It is possible to associate to $(\Phi, f)\colon (G,X,  \varphi)\to (H,Y,\psi)$  the semigroup morphism $F(\Phi, f)=f\times \Phi\colon (X\times G)_\varphi\to (Y\times H)_\psi$ defined by $(f\times\Phi)(x,g)=(f(x),\Phi(g))$ for all $x\in X$, $g\in G$.  In this way we get a faithful functor from the category of group actions on non-empty sets to the category of right groups. 

Given any right group $S$, consider the triplet $(S/{\sim}, E(S), \mathds{1})$, where $\sim$ is the congruence on $S$ defined by $x\sim y$ if $xe=ye$ for some idempotent element $e$ (see Remark~\ref{sim}; the congruence class of $x\in S$ modulo the congruence $\sim$ is $xE$), and $\mathds{1}\colon S/{\sim}\to \Aut_{\Set}(E(S))$ is the trivial group morphism. Then 
$$F(S/{\sim}, E(S), \mathds{1})=(E(S)\times S/{\sim})_{\mathds{1}}=E(S)\times S/{\sim}\cong S,$$
in view of Theorem \ref{1.1} and Remark \ref{sim}. This proves that $F$ is essentially surjective. \end{proof}

\begin{proposition}\label{right-group-iso-F}
Let $F$ be the functor defined in Theorem \ref{functor2}, let $G$ be a group and $X$ any nonempty set. Then, for every group morphism $\varphi:G\to \Aut_\Set(X)$, 	the right groups $F(G,X,\varphi)$ and $F(G,X,\mathds{1})$ are isomorphic, where $\mathds{1}$ is the trivial group morphism. 
\end{proposition}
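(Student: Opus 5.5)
Write $S:=F(G,X,\varphi)=(X\times G)_\varphi$, with multiplication $(x,g)(x',g')=(\varphi_g(x'),gg')$, and write $T:=F(G,X,\mathds{1})$, which is the external direct product $X\times G$ of the right zero semigroup $(X,\pi_2)$ and the group $G$. I would give an explicit isomorphism $\theta\colon S\to T$ that untwists the action:
$$\theta(x,g)=(\varphi_g^{-1}(x),g)=(\varphi_{g^{-1}}(x),g).$$
The map $\theta$ is a bijection, because its inverse is $(y,g)\mapsto(\varphi_g(y),g)$. So the only thing to check is that $\theta$ is multiplicative.

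For that, take $(x,g),(x',g')\in S$. Then
$$\theta\bigl((x,g)(x',g')\bigr)=\theta(\varphi_g(x'),gg')=\bigl(\varphi_{(gg')^{-1}}\varphi_g(x'),gg'\bigr)=(\varphi_{g'^{-1}}(x'),gg'),$$
using that $\varphi$ is a group morphism, so $\varphi_{g'^{-1}g^{-1}}\varphi_g=\varphi_{g'^{-1}}$. On the other side, the product in $T$ is
$$\theta(x,g)\theta(x',g')=(\varphi_{g^{-1}}(x),g)(\varphi_{g'^{-1}}(x'),g')=(\varphi_{g'^{-1}}(x'),gg'),$$
because the first coordinate multiplies by $\pi_2$. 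The two agree, so $\theta$ is a semigroup isomorphism.

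There is essentially no obstacle here; the one point to get right is choosing the correct direction of the twist ($\varphi_{g^{-1}}$ rather than $\varphi_g$) so that the composition law of $\varphi$ cancels. Alternatively one could argue abstractly through Theorem~\ref{1.1}(d) and Remark~\ref{sim}: $S$ is a right group, so $S\cong E(S)\times S/{\sim}$. One would then identify $E(S)=\{(x,1_G)\}$, which is in bijection with $X$ (since $(x,g)$ is idempotent iff $g=1_G$), and $S/{\sim}\cong G$ via the second coordinate. The explicit map above is shorter, so I would present that one.
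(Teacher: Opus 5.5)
Your proof is correct and is essentially the paper's argument. Your $\theta(x,g)=(\varphi_{g^{-1}}(x),g)$ is precisely the inverse of the paper's map $\eta\colon F(G,X,\mathds{1})\to F(G,X,\varphi)$, $\eta(x,g)=(\varphi_g(x),g)$, and you carry out the multiplicativity check that the paper dismisses as ``straightforward''.
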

\begin{proof}
Let $\eta:F(G,X,\mathds{1})\to F(G,X,\varphi)$ be the mapping defined by setting 
$
\eta(x,g):=(\varphi_g(x),g)
$ for all $x\in X$, $g\in G$. It is straightforward to see that $\eta$ is an isomorphism of right groups. 
\end{proof}
\begin{remark}
Observe that the functor $F$ defined in Theorem \ref{functor2} is not full. As a matter of fact, consider a group $G$ and a nonempty set $X$ in such a way there exists a nontrivial group morphism $\varphi:G\to \Aut_{\Set}(X)$. By Proposition, \ref{right-group-iso-F}, the right groups $F(G,X,\varphi)$ and $F(G,X,\mathds{1})$ are isomorphic. On the other hand, the group actions $(G,X,\varphi),(G,X,\mathds{1})$ are not isomorphic: indeed, if there exists an isomorphism $(\Phi,f):(G,X,\varphi)\to(G,X,\mathds{1})$, for some group automorphism $\Phi:G\to G$ and some bijection $f:X\to X$, then it would follow (from the definition of morphism in the category of group actions) that $f(x)=f(\varphi_g(x))$, for all $x\in X$, $g\in G$, and this would force (since $f$ is bijective) each $\varphi_g$ to be trivial, that is, $\varphi=\mathds{1}$, a contradiction. This shows that $F$ is not full. 

Similarly, it can be seen that the functor defined in Theorem \ref{2.7} is not full either. 
\end{remark}

\section{Homomorphisms of right groups}\label{subs}

The results in Subsection \ref{RG} and Section \ref{Crg} suggest to investigate semigroup morphisms $\varphi\colon S\to S'$ between two right groups $S,S'$. Let\linebreak $\Hom_\SGrp(S,S')$ denote the set of all such morphisms. Since images of idempotents via semigroup morphisms are idempotents, we have that $\varphi$ maps the set $E=E(S)$ of all idempotents of $S$ into the set $E'=E(S')$ of all idempotents of $S'$. Thus it is possible to consider the restriction $\varepsilon\colon E\to E'$ of $\varphi$ to $E$. Fix an element $e_0\in E$. Then $\varphi$ maps the group $Se_0$ to the group $S'\varepsilon(e_0)$, so that it is possible to consider the restriction $\varphi|_{Se_0}\colon Se_0\to S'\varepsilon(e_0)$, which is a group morphism.

\begin{proposition}\label{3.4} Let $S,S'$ be right groups. For every semigroup morphism $\varphi\colon S\to S'$ consider the triplet  $(\varepsilon\colon E\to E',e_0,\varphi|_{S{e_0}}\colon Se_0\to S'\varepsilon(e_0))$ described in the previous paragraph. Then:

{\rm (a)} The triplet $(\varepsilon\colon E\to E',e_0,\varphi|_{S{e_0}}\colon Se_0\to S'\varepsilon(e_0))$ completely determines the semigroup morphisms $\varphi\colon S\to S'$.

{\rm (b)} For every mapping $\varepsilon\colon E\to E'$, any element $e_0\in E$ and any group morphism $\psi\colon Se_0\to S'\varepsilon(e_0)$, the triplet $(\varepsilon,e_0,\psi)$ corresponds to a semigroup morphism $\varphi\colon S\to S'$, that is, there exists a semigroup morphism $\varphi\colon S\to S'$ such that $\varepsilon(e)=\varphi(e)$ for every $e\in E$ and $\psi(x)=\varphi(x)$ for every $x\in Se_0$.

{\rm (c)} Two such triplets $(\varepsilon_1,e_1,\psi_1), (\varepsilon_2,e_2,\psi_2)$ correspond to the same semigroup morphism $\varphi\colon S\to S'$ if and only if $\varepsilon_1=\varepsilon_2$ and the diagram \begin{equation}\xymatrix{
    Se_1 \ar[r]^{\psi_1} \ar[d]_{r_{e_2}} &S'\varepsilon_1(e_1)  \ar[d]^{r_{\varepsilon_2(e_2)}}  \\
    Se_2 \ar[r]^{\psi_2}  &S'\varepsilon_2(e_2)  }   \label{dia}\end{equation} commutes.
\end{proposition}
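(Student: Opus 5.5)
The whole argument rests on the decomposition $s=(se_0)\pi_E(s)$ from Proposition~\ref{coproduct}: every $s\in S$ is written uniquely as a product of an element $se_0$ of the group $Se_0$ and an idempotent $\pi_E(s)\in E$.

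For (a), if $\varphi$ is a semigroup morphism, then
$$\varphi(s)=\varphi(se_0)\,\varphi(\pi_E(s))=\varphi|_{Se_0}(se_0)\,\varepsilon(\pi_E(s)).$$
So $\varphi$ is recovered from the triplet by the explicit formula
$$\varphi(s)=\psi(se_0)\,\varepsilon(\pi_E(s)).$$

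For (b), given any $(\varepsilon,e_0,\psi)$, I define $\varphi$ by this same formula and check three things, imitating the computation in the proof of Proposition~\ref{coproduct}(2).

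\emph{$\varphi$ is multiplicative.} Use $ss'e_0=se_0\,s'e_0$ and $\pi_E(ss')=\pi_E(s')$. Since $\varepsilon(\pi_E(s))$ lies in $E'$, it is a left identity of $S'$ by Lemma~\ref{1.2}. Hence
$$\varphi(s)\varphi(s')=\psi(se_0)\,\varepsilon(\pi_E(s))\,\psi(s'e_0)\,\varepsilon(\pi_E(s'))=\psi(se_0)\,\psi(s'e_0)\,\varepsilon(\pi_E(s')),$$
and this equals $\varphi(ss')$ because $\psi$ is a group morphism.

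\emph{$\varphi$ extends $\psi$.} For $x\in Se_0$ we have $xe_0=x$ and $\pi_E(x)=e_0$, so $\varphi(x)=\psi(x)\varepsilon(e_0)$. This equals $\psi(x)$, since $\psi(x)$ lies in the group $S'\varepsilon(e_0)$, whose identity is $\varepsilon(e_0)$.

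\emph{$\varphi$ extends $\varepsilon$.} For $e\in E$ we have $ee_0=e_0$ and $\pi_E(e)=e$, so $\varphi(e)=\psi(e_0)\varepsilon(e)=\varepsilon(e_0)\varepsilon(e)=\varepsilon(e)$, using that $\varepsilon(e_0)$ is the identity of $S'\varepsilon(e_0)$ and $E'$ is a right zero semigroup.

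For (c), suppose both triplets come from the same $\varphi$. Then $\varepsilon_1=\varepsilon_2=\varphi|_E$. For $x\in Se_1$,
$$\psi_2(xe_2)=\varphi(xe_2)=\varphi(x)\varphi(e_2)=\psi_1(x)\,\varepsilon_2(e_2),$$
which is exactly commutativity of diagram~(\ref{dia}).

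Conversely, assume $\varepsilon_1=\varepsilon_2=:\varepsilon$ and the diagram commutes. Let $\varphi_1,\varphi_2$ be the morphisms built in (b). They agree on $E$, and I will show $\varphi_2(s)=\varphi_1(s)$ for all $s$ by comparing the formulas:
$$\varphi_2(s)=\psi_2(se_2)\,\varepsilon(\pi_E(s))=\psi_2(se_1e_2)\,\varepsilon(\pi_E(s))=\psi_1(se_1)\,\varepsilon(e_2)\,\varepsilon(\pi_E(s)),$$
where $se_2=se_1e_2$ and the last step is the diagram. Since $\varepsilon(e_2)\varepsilon(\pi_E(s))=\varepsilon(\pi_E(s))$ in the right zero semigroup $E'$, this is $\psi_1(se_1)\,\varepsilon(\pi_E(s))=\varphi_1(s)$.

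The only delicate point is (b): Lemma~\ref{1.2} must be applied to the \emph{images} $\varepsilon(\pi_E(s))$. This works because $E'$ consists exactly of left identities of $S'$, even though $\varepsilon$ is an arbitrary map of sets and not assumed to come from a morphism.
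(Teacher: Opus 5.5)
Your proof is correct and follows the paper's argument essentially step for step. You use the same formula $\varphi(s)=\psi(se_0)\,\varepsilon(\pi_E(s))$ for (a) and (b), the same multiplicativity computation via elements of $E'$ being left identities, and the same right-multiplication argument for both directions of (c); your explicit check that the constructed $\varphi$ restricts to $\varepsilon$ on $E$ and to $\psi$ on $Se_0$ is a useful addition, since the paper's proof of (b) only verifies that $\varphi$ is a semigroup morphism.
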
 

\begin{proof} (a) For a given semigroup morphism $\varphi\colon S\to S'$ consider the triplet  $(\varepsilon\colon E\to E',e_0,\varphi|_{S{e_0}}\colon Se_0\to S'\varepsilon(e_0))$. If we consider the pointed right groups $(S,e_0)$ and $(S',\varepsilon(e_0))$,  $\varphi\colon S\to S'$ becomes a morphism  $\varphi\colon (S,e_0)\to (S',\varepsilon(e_0))$ of pointed right groups. In view of Proposition~\ref{coproduct}((1) and (2)), $S$ is both a product and a coproduct of $Se_0$ and $E$. Similarly, $S'$ is both a product and a coproduct of $S'\varepsilon(e_0)$ and $E'$.  Now every element $s\in S$ can be written in a unique way as a product of the element $se_0$ of $Se_0$ and the element $\pi_E(s)$ of $E$. Therefore \begin{equation}\varphi(s)=\varphi((se_0)(\pi_E(s)))=\varphi(se_0)\varphi(\pi_E(s))=\varphi|_{S{e_0}}(se_0)\varepsilon(\pi_E(s)).\label{equat}\end{equation} Hence the triplet $(\varepsilon\colon E\to E',e_0,\varphi|_{S{e_0}}\colon Se_0\to S'\varepsilon(e_0))$ completely determines $\varphi\colon S\to S'$. 

(b) Given a triplet  $(\varepsilon,e_0,\psi)$ as in (b), define $\varphi\colon S\to S'$ setting, for every $s\in S$, $\varphi(s)=\psi(se_0)\varepsilon(\pi_E(s))$. Then  $\varphi$ is a semigroup morphism, because $$\begin{array}{l}\varphi(s_1)\varphi(s_2)=\psi(s_1e_0)\varepsilon(\pi_E(s_1))\psi(s_2e_0)\varepsilon(\pi_E(s_2))=\\ \quad\quad = \psi(s_1e_0)\psi(s_2e_0)\varepsilon(\pi_E(s_2))=\psi(s_1s_2e_0)\varepsilon(\pi_E(s_1)\pi_E(s_2))=\\ \quad\quad\quad\quad  = \varphi(s_1s_2).\end{array}$$

(c) Two triplets $(\varepsilon_1,e_1,\psi_1), (\varepsilon_2,e_2,\psi_2)$ correspond to the same semigroup morphism $\varphi\colon S\to S'$ if and only if $$\varphi(s)=\psi_1(se_1)\varepsilon_1(\pi_E(s))=\psi_2(se_2)\varepsilon_2(\pi_E(s))$$  for every $s\in S$. Hence if $(\varepsilon_1,e_1,\psi_1), (\varepsilon_2,e_2,\psi_2)$ correspond to the same semigroup morphism $\varphi\colon S\to S'$, then $\varepsilon_1=\varepsilon_2$, because they are both the restriction of $\varphi$ to $E$. Moreover Diagram (\ref{dia}) commutes because, for every $s\in S$, $r_{\varepsilon_2(e_2)}\psi_1(se_1)=r_{\varphi(e_2)}\varphi(se_1)=\varphi(se_1)\varphi(e_2)=\varphi(se_1e_2)=\varphi(se_2)$ and $\psi_2r_{e_2}(se_1)=\psi_2(se_1e_2)=\psi_2(se_2)=\varphi(se_2)$. Therefore $r_{\varepsilon_2(e_2)}\psi_1=\psi_2r_{e_2}$, and the diagram commutes.

For the converse, we must prove that if $\varepsilon_1=\varepsilon_2$ and Diagram~(\ref{dia})  commutes, then $\psi_1(se_1)\varepsilon_1(\pi_E(s))=\psi_2(se_2)\varepsilon_2(\pi_E(s))$ for every $s\in S$. Now $\varepsilon_1=\varepsilon_2$ and the commutativity of the diagram imply that $r_{\varepsilon_1(e_2)}\psi_1(se_1)=\psi_2r_{e_2}(se_1)$, that is, $\psi_1(se_1)\varepsilon_1(e_2)=\psi_2(se_2)$ for every $s\in S$. Multiplying on the right by $\varepsilon_1(\pi_E(s))=\varepsilon_2(\pi_E(s))$, we get that $\psi_1(se_1)\varepsilon_1(\pi_E(s))=\psi_2(se_2)\varepsilon_2(\pi_E(s))$, as desired. \end{proof}

Notice that the vertical arrows in Diagram (\ref{dia})  are group morphisms. More generally, for any two elements $e,f\in E=E(S)$, where $S$ is right group, the mapping $r_f\colon Se\to Sf$, $r_f(s)=sf$ for all $s\in Se$, is a group isomorphism, because $$r_f(se)r_f(s'e)=sefs'ef=ss'ef=ses'ef=r_f((se)(s'e)).$$ Its inverse is the mapping $r_e\colon Sf\to Se$. This is the reason why the restriction $\varphi|_{Se_0}$ determines the behavior of $\varphi$ on all the blocks of the partition $\{\,Se\mid e\in E\,\}$ of~$S$.

\medskip

From Proposition \ref{3.4} it can be easily shown that:

\begin{theorem}\label{equiv2} The category $\RGrp_*$ of pointed right groups is equivalent to the product category $\Set_*\times\Grp$ of the category $\Set_*$ of pointed sets and the category $\Grp$ of groups.\end{theorem}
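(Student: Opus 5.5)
We will construct an explicit functor $\Phi\colon \RGrp_*\to\Set_*\times\Grp$ and show that it is full, faithful and essentially surjective. On objects we set $\Phi(S,e_0)=((E(S),e_0),\,Se_0)$. Here $(E(S),e_0)$ is a pointed set, and $Se_0$ is a group with identity $e_0$ by Theorem~\ref{1.1} and Remark~\ref{sim}.

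On morphisms, let $\varphi\colon(S,e_0)\to(S',e_0')$ be a morphism of pointed right groups. Since $\varphi(e_0)=e_0'$, the restriction $\varepsilon=\varphi|_{E}\colon E\to E'$ is a pointed map, and $\varphi|_{Se_0}\colon Se_0\to S'\varepsilon(e_0)=S'e_0'$ is a group morphism. We set $\Phi(\varphi)=(\varepsilon,\varphi|_{Se_0})$. Functoriality is immediate because both components are restrictions, so composites and identities are preserved.

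Faithfulness is Proposition~\ref{3.4}(a): by formula (\ref{equat}), $\varphi(s)=\varphi|_{Se_0}(se_0)\,\varepsilon(\pi_E(s))$, so $\varphi$ is determined by the pair $(\varepsilon,\varphi|_{Se_0})$.

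For fullness, take a pointed map $\varepsilon\colon(E,e_0)\to(E',e_0')$ and a group morphism $\psi\colon Se_0\to S'e_0'$. Then $S'e_0'=S'\varepsilon(e_0)$, so Proposition~\ref{3.4}(b) gives a semigroup morphism $\varphi$ with $\varphi|_E=\varepsilon$ and $\varphi|_{Se_0}=\psi$. In particular $\varphi(e_0)=\varepsilon(e_0)=e_0'$, so $\varphi$ is a morphism in $\RGrp_*$ and $\Phi(\varphi)=(\varepsilon,\psi)$.

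For essential surjectivity, take a pointed set $(X,x_0)$ and a group $G$. We form the external direct product $S=X\times G$, where $X$ carries the right zero operation $\pi_2$. This is a right group by Theorem~\ref{1.1}(d), and we point it at $(x_0,1_G)$. Its idempotents are exactly the pairs $(x,1_G)$, so $(E(S),(x_0,1_G))\cong(X,x_0)$ as pointed sets via $x\mapsto(x,1_G)$. Moreover $S(x_0,1_G)=\{(x_0,g)\mid g\in G\}$, which is isomorphic to $G$ as a group. Hence $\Phi(S,(x_0,1_G))\cong((X,x_0),G)$.

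A full, faithful, essentially surjective functor is an equivalence. The only point needing care is fullness: one must check that the morphism produced by Proposition~\ref{3.4}(b) actually sends $e_0$ to $e_0'$, and this holds because $\varepsilon$ is pointed. Alternatively, one could build an explicit quasi-inverse $((X,x_0),G)\mapsto (X\times G,(x_0,1_G))$ and use the natural isomorphisms $Se_0\times E\to S$, $(x,y)\mapsto xy$, from the proof of Proposition~\ref{coproduct}. However, the full/faithful/essentially surjective criterion avoids having to verify naturality by hand.
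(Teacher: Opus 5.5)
Your proof is correct and is essentially the argument the paper intends. The paper only says the theorem follows easily from Proposition~\ref{3.4}, and your functor $(S,e_0)\mapsto((E(S),e_0),Se_0)$ — faithful and full by Proposition~\ref{3.4}(a),(b), essentially surjective via $(X,\pi_2)\times G$ — is the pointed analogue of the paper's proof of Theorem~\ref{equiv1}, with $Se_0$ in place of $S/{\sim}$ (isomorphic via $\overline{r_{e_0}}$) and with the one extra check, that the morphism from Proposition~\ref{3.4}(b) preserves the base point, handled correctly.
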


\subsection{Right inverses $\varphi\colon S/{\sim}\to S$ to the canonical projection $\pi\colon S\to S/{\sim}$.}
In order to try to limit as much as possible the need of introducing the artificial concept of pointed right group, it is convenient to consider the right inverses of the canonical projection $\pi\colon S\to S/{\sim}$. Here $S$ is a right group and $\sim$ is the semigroup congruence on $S$ considered in Remark~\ref{sim}. For any $e\in E$, $\sim$ is the kernel of the semigroup morphism $r_e\colon S\to S$ defined by $r_e(x)=xe$ for every $x\in S$. The congruence class modulo $\sim$ of every element  $x\in S$ is $xE=\{\,xe\mid e\in E\,\}$.  For every right group $S$, the quotient semigroup $S/{\sim}$ is a group. 

If $\psi\colon S\to S'$ is any semigroup morphism between two right groups $S$ and $S'$, then $\psi$ induces a group morphism $\widetilde{\psi}\colon S/{\sim}\to S'/{\sim}$. To see it, fix an element $e_0\in E$. If $s,t\in S$ and $s\sim t$, then $se_0=te_0$, so $\psi(s)\psi(e_0)=\psi(t)\psi(e_0)$. Then $\psi(s)\sim\psi(t)$. Thus $\widetilde{\psi}$ is a well defined mapping. Thus $S\mapsto S/{\sim}$, $\psi\mapsto \widetilde{\psi}$, is a functor $\RGrp\to\Grp$.

\begin{proposition}\label{inv} For a right group $S$, there is a one-to-one correspondence between the set of the semigroup morphisms that are right inverses of the canonical projection $\pi\colon S\to S/{\sim}$ and the set $E(S)$. If $e_0\in E(S)$, the right inverse homomorphism of $\pi$ corresponding to $e_0$ is the semigroup morphism $\overline{r_{e_0}}\colon S/{\sim}\to S$ induced by right multiplication $r_{e_0}\colon S\to S$ by $e_0$.\end{proposition}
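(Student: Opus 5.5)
The plan is to set up the map $\sigma\mapsto\sigma(1_{S/{\sim}})$ from the set of right inverses of $\pi$ to $E(S)$ and show it is a bijection, with inverse $e_0\mapsto\overline{r_{e_0}}$. Throughout I use the facts from Remark~\ref{sim}: $S/{\sim}$ is a group, the class of $x$ is $xE$, and $x\sim y$ if and only if $xe_0=ye_0$ for some (equivalently, every) $e_0\in E$.

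\emph{From idempotents to right inverses.} Fix $e_0\in E(S)$. Since $\sim$ is the kernel of $r_{e_0}\colon S\to S$, $x\mapsto xe_0$, the map $r_{e_0}$ induces a well-defined injective map $\overline{r_{e_0}}\colon S/{\sim}\to S$, $xE\mapsto xe_0$. It is a semigroup morphism because $xe_0ye_0=xye_0$ (this uses that $e_0$ is a left identity, Lemma~\ref{1.2}). It is a right inverse of $\pi$ because $\pi(xe_0)=xe_0E=xE$, since $xe_0\sim x$ (indeed $xe_0e_0=xe_0$). Moreover $\overline{r_{e_0}}$ sends the identity $E$ of $S/{\sim}$ to $e_0e_0=e_0$. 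So $e_0$ is recovered as the image of the identity element, and the assignment $e_0\mapsto\overline{r_{e_0}}$ is injective.

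\emph{Every right inverse is of this form.} Let $\sigma\colon S/{\sim}\to S$ be a semigroup morphism with $\pi\sigma=\id$. The identity $1=E$ of the group $S/{\sim}$ is idempotent, so $f:=\sigma(1)$ lies in $E(S)$. This is the key step and the main point to get right: I must show $\sigma=\overline{r_f}$. Since $\sigma$ is a semigroup morphism, $\sigma(\xi)=\sigma(\xi\cdot 1)=\sigma(\xi)f$, so $\sigma(\xi)\in Sf$ for every $\xi\in S/{\sim}$. Now take $\xi=xE$. From $\pi\sigma(\xi)=\xi$ we get $\sigma(\xi)\sim x$. Right-multiplying by $f$ and using that $\sim$ is the kernel of $r_f$ gives $\sigma(\xi)f=xf$. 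Hence $\sigma(\xi)=xf=\overline{r_f}(\xi)$.

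The two assignments $\sigma\mapsto\sigma(1)$ and $e_0\mapsto\overline{r_{e_0}}$ are therefore mutually inverse, which gives the bijection. The only subtlety is the independence of $\sim$ from the choice of idempotent, which Remark~\ref{sim} already supplies, so I expect no real obstacle.
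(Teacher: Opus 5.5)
Your proof is correct. It rests on the same underlying idea as the paper's: the image $e_0$ of the identity of $S/{\sim}$ is idempotent, the section takes values in $Se_0$, and $\pi$ is injective on $Se_0$ because $x\sim y$ forces $xe_0=ye_0$. The packaging differs.

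The paper argues through the triplet correspondence of Proposition~\ref{3.4}. It shows that $\pi|_{Se_0}\colon Se_0\to S/{\sim}$ is a group isomorphism. It then deduces that the corestriction of the given right inverse to $Se_0$ is the inverse map $sE\mapsto se_0$. Finally it observes that $\overline{r_{e_0}}$ has the same triplet.

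You compute directly instead. From $\sigma(\xi)=\sigma(\xi)f$ and $\sigma(\xi)\sim x$ you get $\sigma(\xi)=xf$ in one step, with no appeal to Proposition~\ref{3.4}. That proposition is rather heavy here, since $E(S/{\sim})$ is a singleton and the triplet carries no extra information.

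Your version also makes explicit two points the paper leaves to the reader:
\begin{enumerate}
\item each $\overline{r_{e_0}}$ is well defined, is a semigroup morphism, and is a section of $\pi$;
\item $e_0\mapsto\overline{r_{e_0}}$ is injective, by evaluating at the identity.
\end{enumerate}
Together these make the correspondence a genuine bijection rather than only a classification of right inverses.

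The paper's route fits the statement into its general description of morphisms out of a right group. Yours is shorter and self-contained.
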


\begin{proof}
If $\varphi\colon S/{\sim}\to S$ is any semigroup morphism such that $\pi\varphi=\id_{S/{\sim}}$, then $\varphi$ must map the identity $E$ of the group $S/{\sim}$ to an idempotent element $e_0$ of $S$, that is, to an element $e_0\in E:=E(S)$. We have the direct-product decomposition $S=Se_0\times E$ and, correspondingly, the trivial direct-product decomposition $S/{\sim}=(S/{\sim})E\times \{E\}$. Let us show that the restriction $\pi|_{Se_0}\colon Se_0\to S/{\sim}=(S/{\sim})E$ of the canonical projection $\pi\colon S\to S/{\sim}$ is a (semi)group isomorphism. The mapping $\pi|_{Se_0}$ is defined by $\pi|_{Se_0}(se_0)=se_0E=sE$ for every $se_0\in Se_0$. It is injective because if $s,s'\in S$ and $sE=s'E$, then, multiplying by $e_0$ on the right we get that $se_0=s'e_0$. It is surjective, because if $sE\in S/{\sim}$, then, multiplying $s$ by $e_0$ on the right, we see that $\pi|_{Se_0}(se_0)=sE$. Therefore $\pi|_{Se_0}$ is an isomorphism.

  In the notations of Proposition~\ref{3.4}, the triplet  $$(\varepsilon\colon E(S/{\sim})\to E(S),E,\varphi|_{S/{\sim}}\colon S/{\sim}\to Se_0)$$ corresponding to the semigroup morphism $\varphi\colon S/{\sim}\to S$ is such that\linebreak $\varepsilon\colon E(S/{\sim})\to E(S)$ maps the unique element $E$ of $E(S/{\sim})$ to $e_0$. Also, $\pi\varphi=\id_{S/{\sim}}$ implies $\pi|_{Se_0}\varphi|^{Se_0}=\id_{S/{\sim}}$. Since $\pi|_{Se_0}\colon Se_0\to S/{\sim}=(S/{\sim})E$ is an isomorphism, it follows that $\pi|_{Se_0}\colon Se_0\to S/{\sim}$ and $\varphi|^{Se_0}\colon S/{\sim}\to Se_0$ are mutually inverse group isomorphisms. Thus $\varphi|^{Se_0}\colon S/{\sim}\to Se_0$ maps any element $sE$ of $S/{\sim}$ to $se_0$. Now $r_{e_0}\colon S\to S$ induces an injective homomorphism $\overline{r_{e_0}}\colon S/{\sim}\to S$, and it is easily seen that $\overline{r_{e_0}}$ also corresponds to the same triplet $(\varepsilon,E,\varphi|_{S/{\sim}})$ as $\varphi$. \end{proof}

\begin{proposition} {\rm (a)} If $(S,\cdot)$ is a right group and $e,f\in E(S)$, then $(S,\cdot,e)$ and $(S,\cdot,f)$ are isomorphic pointed right groups.

{\rm (b)} The forgetful functor $\RGrp_*\to \RGrp$ that associates to each pointed right group $(S,\cdot,e)$ the right group $(S,\cdot)$ is a faithful, essentially surjective functor.\end{proposition}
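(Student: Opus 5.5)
For (a), the plan is to write down an explicit automorphism of the semigroup $(S,\cdot)$ that sends $e$ to $f$. Using the decomposition of Proposition~\ref{coproduct}, every $s\in S$ can be written uniquely as $s=(se)\pi_E(s)$ with $se\in Se$ and $\pi_E(s)\in E$. The natural candidate is built from two pieces. The first is the identity on the group part: we use the group isomorphism $r_e\colon Se\to Se$, or equivalently we keep the $\sim$-class fixed. The second is a permutation $\tau$ of the set $E$ swapping $e$ and $f$ and fixing every other idempotent.

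Concretely, I would apply Proposition~\ref{3.4}(b) to the triplet $(\tau,e,\psi)$. Here $\tau\colon E\to E$ is the transposition $(e\ f)$, and $\psi\colon Se\to S\tau(e)=Sf$ is the group isomorphism $r_f$. This produces a semigroup endomorphism $\varphi\colon S\to S$, given by $\varphi(s)=(sef)\tau(\pi_E(s))=s\,\tau(\pi_E(s))$; the second equality holds because $ef'=f'$ for idempotents. It satisfies $\varphi(e)=\tau(e)=f$.

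To see that $\varphi$ is bijective, I would apply the same construction with $\tau^{-1}=\tau$ and check that the composite is the identity. The check is that $\varphi(\varphi(s))=\varphi(s)\,\tau(\pi_E(\varphi(s)))$. Since $\pi_E(s\,e')=e'$ for $e'\in E$ (as $s e' e'=se'$), this becomes $s\,\tau(\pi_E(s))\,\pi_E(s)=s\,\pi_E(s)=s$. So $\varphi$ is an involutive automorphism, and hence an isomorphism of pointed right groups $(S,\cdot,e)\to(S,\cdot,f)$. The only delicate point is this computation of $\pi_E$ on $se'$, and it is immediate from the characterization of $\pi_E$ in Remark~\ref{sim}.

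For (b), faithfulness is trivial, since morphisms of pointed right groups are by definition semigroup morphisms and the functor is the identity on them. Essential surjectivity holds because every right group $S$ has an idempotent: Theorem~\ref{1.1}(c) gives $E(S)\neq\emptyset$. So $(S,\cdot,e)$ is a pointed right group mapped to $S$. Part (a) then shows that the choice of $e$ is irrelevant up to isomorphism. I expect no real obstacle here; the main work is the bijectivity check in (a).
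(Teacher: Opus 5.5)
Your proposal is correct and follows the paper's own argument. The paper likewise feeds the triplet $(\varepsilon,e,r_f)$, with $\varepsilon$ any bijection of $E(S)$ sending $e$ to $f$, into Proposition~\ref{3.4}(b), and treats (b) as easy; your choice of the transposition $\tau=(e\ f)$ and your check that $\varphi\circ\varphi=\id_S$ make explicit the bijectivity that the paper leaves to the reader.
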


\begin{proof} (a) Let $\varepsilon\colon E(S)\to E(S)$ be any bijection that maps $e$ to $f$. Let $r_f\colon Se\to Sf$ be the group isomorphism given by right multiplication by $f$. The triplet $(\varepsilon, e, r_f)$ corresponds to an isomorphism $(S,\cdot,e)\to(S,\cdot,f)$. The proof of (b) is easy.\end{proof}

\begin{theorem}\label{equiv1} The category of right groups is equivalent to the product category of the category $\Set_{\ne\emptyset}$ of non-empty sets and the category $\Grp$ of groups.\end{theorem}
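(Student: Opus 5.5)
I would construct an explicit functor $\Phi\colon \RGrp\to\Set_{\ne\emptyset}\times\Grp$ and show that it is full, faithful and essentially surjective. On objects, $\Phi(S)=(E(S),S/{\sim})$: here $E(S)$ is nonempty by Theorem~\ref{1.1}, and $S/{\sim}$ is a group by Remark~\ref{sim}. On morphisms, for a semigroup morphism $\varphi\colon S\to S'$ I set $\Phi(\varphi)=(\varepsilon,\widetilde{\varphi})$. Here $\varepsilon=\varphi|_{E}\colon E(S)\to E(S')$ is well defined because idempotents go to idempotents, and $\widetilde{\varphi}\colon S/{\sim}\to S'/{\sim}$ is the induced group morphism constructed just before Proposition~\ref{inv}. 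Functoriality is immediate, since both restriction and passage to the quotient respect composition and identities.

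\emph{Essential surjectivity.} Given a nonempty set $X$ and a group $G$, I take the right group $X\times G$, where $X$ carries the right zero operation $\pi_2$; this is a right group by Theorem~\ref{1.1}(d). Its idempotents are exactly the pairs $(x,1_G)$, so $E(X\times G)\cong X$ as sets. Moreover $(x,g)\sim(x',g')$ if and only if $g=g'$, because right multiplication by $(x_0,1_G)$ gives $(x_0,g)$. Hence $(X\times G)/{\sim}\cong G$, and $\Phi(X\times G)\cong(X,G)$.

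\emph{Faithfulness and fullness.} Both reduce to Proposition~\ref{3.4}. Fix $e_0\in E(S)$ and use the group isomorphism $\pi|_{Se_0}\colon Se_0\to S/{\sim}$ from the proof of Proposition~\ref{inv}, together with the analogous isomorphism $\pi'|_{S'\varepsilon(e_0)}\colon S'\varepsilon(e_0)\to S'/{\sim}$. Under these isomorphisms, $\varphi|_{Se_0}$ corresponds to $\widetilde{\varphi}$, because $\varphi(se_0)E'=\varphi(s)\varphi(e_0)E'=\varphi(s)E'$ (multiplying by an idempotent does not change the $\sim$-class).
\begin{itemize}
\item For faithfulness: if $\Phi(\varphi)=\Phi(\psi)$, then $\varphi$ and $\psi$ agree on $E$, and $\varphi|_{Se_0}=\psi|_{Se_0}$ since they correspond to the same $\widetilde{\varphi}$. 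Proposition~\ref{3.4}(a) then gives $\varphi=\psi$.
\item For fullness: given a map $\varepsilon\colon E(S)\to E(S')$ and a group morphism $\gamma\colon S/{\sim}\to S'/{\sim}$, define
\[\psi=(\pi'|_{S'\varepsilon(e_0)})^{-1}\circ\gamma\circ\pi|_{Se_0}\colon Se_0\to S'\varepsilon(e_0).\]
Proposition~\ref{3.4}(b) yields $\varphi(s)=\psi(se_0)\varepsilon(\pi_E(s))$, which restricts to $\varepsilon$ on $E$. It remains to check that $\widetilde{\varphi}=\gamma$. Since right multiplication by an idempotent of $S'$ does not change $\sim$-classes, $\varphi(s)E'=\psi(se_0)E'=\gamma(sE)$.
\end{itemize}

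The main obstacle, as far as there is one, is the bookkeeping in identifying $\widetilde{\varphi}$ with $\varphi|_{Se_0}$ through the isomorphisms $Se_0\cong S/{\sim}$. The key fact is that $xf\sim x$ for every idempotent $f$, and this is exactly what decouples the group component from the set component. A full, faithful, essentially surjective functor is an equivalence, which proves the theorem.
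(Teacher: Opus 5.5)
Your proposal is correct and follows essentially the paper's own route. You use the same functor $S\mapsto(E(S),S/{\sim})$, and you reduce fullness and faithfulness to Proposition~\ref{3.4} by transporting $\varphi|_{Se_0}$ to $\widetilde{\varphi}$ through the isomorphism $Se_0\cong S/{\sim}$ (you use $\pi|_{Se_0}$, the paper uses its inverse $\overline{r_{e_0}}$), with essential surjectivity via $(X,\pi_2)\times G$; your explicit checks that the constructed $\varphi$ satisfies $\widetilde{\varphi}=\gamma$, and that $\Phi(X\times G)\cong(X,G)$, fill in details the paper leaves implicit.
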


\begin{proof} The category equivalence is the functor $$F\colon\RGrp\to\Set_{\ne\emptyset}\times\Grp$$ that associates to every right group $S$ the pair $(E(S),S/{\sim})$, where $\sim$ is the kernel of any right multiplication $r_e\colon S\to S$ (Remark~\ref{sim}). The functor $F$ associates to every right group morphism $f\colon S\to S'$ the pair of morphisms $(f|_E\colon E(S)\to E(S'), \widetilde{f}\colon S/{\sim}\to S'/{\sim})$, where $f|_E$ is the restriction of $f$ to $E(S)$, and $\widetilde{f}$ is the group morphism of $S/{\sim}$ into $S'/{\sim}$ induced by $f$. In order to prove that $F$ is full and faithful, we must prove that the mapping \begin{equation}\begin{array}{c}\Hom_{\RGrp}(S,S')\to \Hom_{\Set}(E(S),E(S'))\times\Hom_{\Grp}(S/{\sim},S'\sim), \\ f\mapsto (f|_{E(S)}, \widetilde{f})\end{array}\label{biiez} \end{equation} is a bijection for all right groups $S,S'$. Fix any element $e_0\in E(S)$. Then right multiplication $r_{e_0}\colon S\to Se_0$ is a surjective semigroup morphism with kernel $\sim$, hence it induces a (semi)group isomorphism $\overline{r_{e_0}}\colon S/{\sim}\to Se_0$. Since $f$ is a semigroup morphism, we have the commutative diagram \[ \xymatrix{
    S \ar[r]^{r_{e_0}} \ar[d]_{f} &Se_0 \ar[d]^{f|_{Se_0}}  \\
    S' \ar[r]_{r_{f(e_0)}}  &S'f(e_0),  }   \] so that $\widetilde{f}=(\overline{r_{f(e_0)}})^{-1}f|_{Se_0}\overline{r_{e_0}}$. Hence, if $f$ corresponds to the triplet $$(f|_E, e_0,f|_{Se_0})$$ in the sense of Proposition~\ref{3.4}, then the mapping in (\ref{biiez}) associates to $f$ the pair $(f|_{E(S)}, \widetilde{f}=(\overline{r_{f(e_0)}})^{-1}f|_{Se_0}\overline{r_{e_0}})$. The proof that the mapping in (\ref{biiez}) is bijective follows therefore from Proposition~\ref{3.4}.
    
    Finally, the functor $F$ is essentially surjective. In order to see it, associate to any set $X$ and any group $G$, the direct product of $G$ and the right zero semigroup $(X,\pi_2)$.
\end{proof}

In view of Theorem~\ref{equiv1}, it is natural to define as {\em kernel} of a morphism $\psi\colon S\to S'$ between two right groups the pair $(\sim_{\psi|_E},K)$, where $\sim_{\psi|_E}$ is the kernel of the mapping $\psi|_E\colon E\to E'$ (it is a partition of the set $E$) and $K$ is the kernel of the group morphism $\widetilde{\psi}\colon S/{\sim}\to S'/{\sim})$ (it is a normal subgroup of the group $S/{\sim}$). 

The inverse image of an element $s'\in S'$ via the semigroup morphism $\psi\colon S\to S'$ can be computed as follows. Given $s'\in S'$, we have that $s'E'\in S'/{\sim}$ and $\pi_{E'}(s')\in E'$. Assume that $s'E'\in\Imm(\widetilde{\psi})$ and $\pi_{E'}(s')\in \Imm(\psi|_E)$. Then there exist $s_1\in S$ such that $\psi(s_1)\in s'E$ and $s_2\in E(S)$ such that $\psi(s_2)=\pi_{E'}(s')$.
In this case, we have $$\begin{array}{l}\psi^{-1}(s')=(\widetilde{\psi})^{-1}(s'E')\cdot(\psi|_E)^{-1}(\pi_{E'}(s'))= \\ \qquad\qquad\qquad =s_1E\cdot K\cdot[s_2]_{\sim_{\psi|_E}}=s_1K\cdot[s_2]_{\sim_{\psi|_E}}.\end{array}$$ In the other case, that either  $s'E'\notin\Imm(\widetilde{\psi})$ or $\pi_{E'}(s')\notin \Imm(\psi|_E)$, we have that $\psi^{-1}(s')=\emptyset$. 

\section{Pretorsion theory for right groups}\label{pretorsion}

We now briefly recall the notions developed in \cite{FF} and \cite{fa-fi-gr} about pretorsion theories in arbitrary categories.
Let $\mathsf{C}$ be a category and $\mathsf{Z}$ be a non-empty class of objects of $\mathsf{C}$. For every pair $A,A'$ of objects of $\mathsf{C}$, we indicate by $\Triv_{\mathsf{Z}}(A, B)$ the set of  all morphisms in $\mathsf{C}$ that factor through an object of $\mathsf{Z}$. We call these morphisms {\em $\mathsf{Z}$-trivial}.

If $f\colon A\to A'$ is a morphism in $\mathsf{C}$, a morphism $\varepsilon\colon X\to A$ in $\mathsf{C} $ is a \emph{$\mathsf{Z}$-prekernel} of $f$ if: 
\begin{enumerate}
	\item $f\varepsilon$ is a $\mathsf{Z}$-trivial morphism.
	\item If $\lambda \colon Y\to A$ is any morphism in $\mathsf{C}$ for which $f\lambda$ is $\mathsf{Z}$-trivial, then there exists a unique morphism $\lambda'\colon Y\to X$ in $\mathsf{C}$ such that $\lambda=\varepsilon\lambda'$. 
\end{enumerate}
Dually, a \emph{$\mathsf{Z}$-precokernel} of $f$ is a morphism $\eta\colon A'\to X$ such that:
\begin{enumerate}
	\item $\eta f$ is a $\mathsf{Z}$-trivial morphism.
	\item If $\mu\colon A'\to Y$  is any morphism in $\mathsf{C}$ for which $\mu f$ is $\mathsf{Z}$-trivial, then there exists a unique morphism $\mu'\colon X\to Y$ with $\mu=\mu' \eta$.
\end{enumerate}

If $f\colon A\to B$ and $g\colon B\to C$ are morphisms in $\mathsf{C}$, we say that $$\xymatrix{
	A \ar[r]^f &  B \ar[r]^g &  C}$$ is a \emph{short $\mathsf{Z}$-preexact sequence} in $\mathsf{C}$ if $f$ is a $\mathsf{Z}$-prekernel of $g$ and $g$ is a $\mathsf{Z}$-precokernel of $f$.

  \begin{definition}\label{pretorsion-theory-def} {\rm Let $\mathsf{C}$ be a category, and $\mathsf{T},\mathsf{F}$ be two replete (that is, closed under isomorphism) full subcategories of $\mathsf{C}$. Set $\mathsf{Z}:=\mathsf{T}\cap\mathsf{F}$. The pair $(\mathsf{T},\mathsf{F})$ is a {\em pretorsion theory}  in the category $\mathsf{C}$ if the following properties hold.
  	
	(1) $\Hom_{\mathsf{C}}(T,F)=\Triv_{\mathsf{Z}}(T, F)$  for every object $T\in\mathsf{T}$, $F\in\mathsf{F}$.
	
	  (2) For every object $B$ of $\mathsf{C}$ there is a short $\mathsf{Z}$-preexact sequence $$\xymatrix{
	A \ar[r]^f &  B \ar[r]^g &  C}$$ with $A\in\mathsf{T}$ and $C\in\mathsf{F}$.}\end{definition}

	Now let $\mathsf{C}:=\Rg$ be the category of right groups, let $\mathsf{T}:=\Rzs$ (resp., $\mathsf{F}:=\Grp$) be the full subcategories of $\Rg$ consisting of right zero semigroups (resp., groups),  and $\mathsf{Z}:=\mathsf{T}\cap\mathsf{F}$. Clearly $\mathsf Z$ consists of all (semi)groups of order $1$, i.e., the terminal objects of $\Rg$. Thus $\mathsf Z$-trivial morphisms $S\to S'$ are the semigroup morphisms whose image is a singleton, that is, they are exactly the constant morphisms $S\to S'$, that is, the mappings $f\colon S\to S'$ for which there exists an element $e_0'\in E(S')$ for which $f(s)=e_0'$ for every $s\in S$.
	
	\medskip

	Our first goal is to characterize morphisms in $\RGrp$ that admit a $\mathsf Z$-pre\-ker\-nel. 
	
	\begin{lemma}\label{Z-prekernel-charact}
		Let $f\colon S\to S'$ be a morphism in $\RGrp$ and $E:=E(S)$. Then $f$ has a $\mathsf Z$-prekernel in $\Rg$  if and only if $f(E)$ is a singleton. 
		Moreover, if $f$ has a $\mathsf Z$-prekernel in $\Rg$ and $f(E)=\{g_0\}$, then $K:=f^{-1}(\{g_0\})$ is a right subgroup of $S$, and the inclusion morphism $i\colon K\to S$ is a $\mathsf Z$-prekernel of $f$. 
	\end{lemma}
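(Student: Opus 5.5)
We prove the two implications separately, starting with the sufficiency direction because it also yields the explicit prekernel.

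\emph{Sufficiency.} Assume $f(E)=\{g_0\}$. Since $g_0$ is the image of an idempotent, it lies in $E(S')$. Set $K:=f^{-1}(\{g_0\})$.

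First we check that $K$ is a right subgroup of $S$. It is a subsemigroup, because $g_0g_0=g_0$, and it contains $E$, so it is non-empty. To show it is a right group we use Theorem~\ref{1.1}(e). Fix $e_0\in E\subseteq K$; it is a left identity of $S$, hence of $K$. For $k\in K$, let $k^{-1}$ be the right inverse of $k$ in $S$ with respect to $e_0$, so $kk^{-1}=e_0$. Applying $f$ gives $g_0f(k^{-1})=g_0$. Because $g_0$ is a left identity of $S'$, this reads $f(k^{-1})=g_0$, so $k^{-1}\in K$.

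Next, $f\circ i$ is constant with value $g_0$, hence $\mathsf Z$-trivial.

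For the universal property, let $\lambda\colon Y\to S$ be a morphism in $\RGrp$ with $f\lambda$ constant, say equal to $c'\in E(S')$. Since $Y$ has an idempotent $y$, the element $\lambda(y)$ lies in $E$. Therefore $c'=f(\lambda(y))=g_0$, so $\lambda(Y)\subseteq K$. Thus $\lambda$ factors through the inclusion $i$, and the factorization is unique because $i$ is a monomorphism (it is injective).

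\emph{Necessity.} Assume $\varepsilon\colon X\to S$ is a $\mathsf Z$-prekernel of $f$. Then $f\varepsilon$ is constant, with value some $g_0$.

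For each $e\in E$, consider the morphism $\lambda_e\colon\{*\}\to S$ from the one-element (right) group onto $e$. The composite $f\lambda_e$ is trivially constant, since its domain is a singleton. Hence $\lambda_e$ factors through $\varepsilon$, giving $e\in\varepsilon(X)$. Consequently $f(e)\in f\varepsilon(X)=\{g_0\}$ for every $e\in E$, so $f(E)=\{g_0\}$ is a singleton.

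The key point, and the main subtlety, is this use of maps from terminal objects: constant morphisms out of a one-point right group are automatically $\mathsf Z$-trivial, which forces every idempotent into the image of any prekernel. Once that is observed, everything else is routine verification.
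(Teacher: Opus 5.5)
Your proof is correct and follows the paper's argument essentially step for step: the same $K=f^{-1}(\{g_0\})$, the same use of an idempotent of $Y$ to force $\lambda(Y)\subseteq K$, and the same trick of sending a one-element right group onto each $e\in E$ for the converse. The only difference is cosmetic. You check that $K$ is a right group via criterion (e) of Theorem~\ref{1.1} (right inverses with respect to $e_0$), whereas the paper uses unique solvability of $ax=b$. Both reduce to the cancellation $g_0f(x)=g_0\Rightarrow f(x)=g_0$, which you correctly justify by $g_0$ being a left identity of $S'$.
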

	
	\begin{proof}
		Assume that $f(E)=\{g_0\}$, so that, in particular, $g_0$ must be an idempotent of $S'$. Then $K:=f^{-1}(\{g_0\})$ is a subsemigroup of $S$. Now, let $a,b\in K$ and let $x\in S$ be the unique element such that $ax=b$ (Theorem~\ref{1.1}(a)). Then		$$g_0^2=g_0=f(b)=f(a)f(x)=g_0f(x),$$
		and thus $f(x)=g_0$, because $S$ is left cancelative. This proves that $x\in K$, and thus $K$ is a right subgroup of $S$. Let $i\colon K\to S$ be the inclusion.  By construction, the composition $fi$ is $\mathsf{Z}$-trivial. Consider now any morphism $\lambda\colon Y\to S$ of right groups such that $f\lambda$ is $\mathsf{Z}$-trivial, that is, there is an idempotent $g_1\in S'$ such that $f(\lambda(Y))=\{g_1\}$. Since $Y$ has an idempotent $e_0$ and $\lambda(e_0)\in E$, it immediately follows that $g_1=g_0$, proving that $\lambda(Y)\subseteq K$. Hence the mapping $\lambda'\colon Y\to K$, $y\mapsto \lambda(y)$, is the unique morphism in $\Rg$ such that $\lambda=i\lambda'$. This proves that $i$ is a $\mathsf Z$-prekernel of $f$. 
		
			Conversely, assume that $f$ has a $\mathsf Z$-prekernel $j\colon L\to S$. In particular, $fj$ is $\mathsf{Z}$-trivial, that is, $f(j(L))=\{g_0\}$ for some idempotent $g_0\in S'$. Since $L$ has an idempotent $l_0$ and $j(l_0)\in E$, it follows that $g_0\in f(E)$. Consider now any element $e\in E$. Then the inclusion $\iota\colon \{e\}\to S$ is a morphism in $\Rg$ and the composition $f\iota$ is a constant morphism of right groups, and thus it is $\mathsf{Z}$-trivial. Since $j$ is a $\mathsf Z$-prekernel of $f$, there is a unique morphism $\iota_0\colon \{e\}\to L$ such that $\iota=j\iota_0$. It follows that $e\in j(L)$ and thus $f(j(L))=\{g_0\}$ implies that $f(e)=g_0$. This proves that $f(E)=\{g_0\}$. 
	\end{proof}
	
	\begin{lemma}\label{pretorsion-cruciale}
		Let $\mu\colon S\to T$ be a morphism of right groups such that $\mu(E(S))$ is a singleton. Then the kernel $\sim$ of the canonical projection $\pi_G\colon S\to S/{\sim}$ is contained in the kernel of $\mu$. In particular, there exists a unique morphism $\mu'\colon S/{\sim}\to T$ such that $\mu=\mu'\pi_G$. 
	\end{lemma}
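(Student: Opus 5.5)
Let $\mu(E(S))=\{g_0\}$; then $g_0$ is an idempotent of $T$, since images of idempotents are idempotents. The plan is to show directly that $x\sim y$ implies $\mu(x)=\mu(y)$. By Remark~\ref{sim}, $x\sim y$ means $xe_0=ye_0$ for a fixed (equivalently, any) $e_0\in E(S)$.

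The key step is the identity $\mu(s)=\mu(s)g_0$ for every $s\in S$. By Remark~\ref{sim}, $\pi_E(s)$ is the idempotent $f\in E(S)$ with $sf=s$. Applying $\mu$ gives
\[
\mu(s)=\mu(sf)=\mu(s)\mu(f)=\mu(s)g_0 .
\]
Now suppose $xe_0=ye_0$. Then
\[
\mu(x)=\mu(x)g_0=\mu(x)\mu(e_0)=\mu(xe_0)=\mu(ye_0)=\mu(y)\mu(e_0)=\mu(y)g_0=\mu(y).
\]
Hence ${\sim}\subseteq\ker\mu$.

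The second statement then follows from the homomorphism theorem for semigroups. Because $\sim$ is contained in the kernel congruence of $\mu$, the map $\mu'\colon S/{\sim}\to T$, $\mu'(sE)=\mu(s)$, is well defined. It is a semigroup morphism because $\pi_G$ is one; more precisely, $\pi_G$ here means the canonical projection $S\to S/{\sim}$, which is isomorphic to $r_{e_0}$ via $\overline{r_{e_0}}$. Uniqueness holds because $\pi_G$ is surjective. Since $S/{\sim}$ and $T$ are right groups, $\mu'$ is a morphism in $\RGrp$.

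There is no serious obstacle here. The one point needing care is the identity $\mu(s)=\mu(s)g_0$, which uses the fact that each $s$ has a right identity in $E(S)$, namely $\pi_E(s)$. It does not use that idempotents are left identities, since that would only give $g_0\mu(s)=\mu(s)$.
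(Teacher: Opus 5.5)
Your proof is correct and is essentially the paper's argument. Both rest on writing each $s$ via its idempotent right identity $\pi_E(s)$ and on $\mu$ being constant on $E(S)$: the paper uses $\mu(s)=\mu(se_0)\mu(\pi_E(s))$, while you derive $\mu(s)=\mu(s)g_0=\mu(se_0)$. You then conclude, as the paper does, by the homomorphism theorem and the surjectivity of $\pi_G$.
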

	\begin{proof} Fix any idempotent element $e_0$ of $S$. As we saw in the proof of Proposition~\ref{3.4}(a) (Identity~\ref{equat}), for every morphism $\mu\colon S\to T$ of right groups we have $\mu(s)=\mu|_{Se_0}(se_0)\mu|_E(\pi_E(s))$ for every $s\in S$. Assume that\linebreak $\mu(E(S))$ is a singleton, and let
	$t$ be the unique element of $\mu(E(S))$. In order to show that $\sim$ is contained in the kernel of $\mu$, let $s,s'$ be two elements of $S$ such that $s\sim s'$. Then $se_0=s'e_0$ (Remark~\ref{sim}) and $\mu|_E(\pi_E(s))=t=\mu|_E(\pi_E(s'))$, so that $\mu(s)=\mu(s')$. This proves that $\sim$ is contained in the kernel of $\mu$. From this it follows that $\mu\colon S\to T$ induces a unique semigroup morphism $\mu'\colon S/{\sim}\to T$, i.e., there is a unique morphism $\mu'\colon S/{\sim}\to T$ such that 
	$\mu=\mu'\pi_G$. 	
\end{proof}
	
	We have alreay remarked that the full subcategory $\mathsf{T}=\Rzs$ of $\Rg$ consisting of right zero semigroups  is isomorphic to the category $\Set$.
	
	\begin{theorem}
		The pair $(\Rzs,\Grp)$ is a pretorsion theory for $\Rg$. 
	\end{theorem}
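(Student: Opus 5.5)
We have to check the two conditions of Definition~\ref{pretorsion-theory-def} for $\mathsf{T}=\Rzs$, $\mathsf{F}=\Grp$. Both are replete full subcategories of $\Rg$, and $\mathsf Z=\mathsf T\cap\mathsf F$ consists of the one-element semigroups. Recall that the $\mathsf Z$-trivial morphisms are exactly the constant morphisms.

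\emph{Condition (1).} Let $T$ be a right zero semigroup, $G$ a group and $f\colon T\to G$ a semigroup morphism. Every element of $T$ is idempotent, so $f(T)\subseteq E(G)=\{1_G\}$. Hence $f$ is constant and therefore $\mathsf Z$-trivial.

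\emph{Condition (2).} For a right group $S$ with $E=E(S)$, we propose the sequence
\[
E\xrightarrow{\ i\ }S\xrightarrow{\ \pi_G\ }S/{\sim},
\]
where $i$ is the inclusion and $\pi_G$ is the canonical projection of Remark~\ref{sim}. Here $E\in\Rzs$ and $S/{\sim}\in\Grp$.

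First we show that $i$ is a $\mathsf Z$-prekernel of $\pi_G$. The congruence class of an idempotent $e$ is $eE=E$, so $\pi_G(E)$ is the singleton $\{E\}$, the identity of $S/{\sim}$. By Lemma~\ref{Z-prekernel-charact}, the inclusion of $K:=\pi_G^{-1}(\{E\})$ is a $\mathsf Z$-prekernel of $\pi_G$. It remains to check that $K=E$. If $s\sim e$ with $e\in E$, then $se_0=ee_0=e_0$ for a fixed $e_0\in E$. Then $s=se_0\pi_E(s)=e_0\pi_E(s)=\pi_E(s)\in E$, using the decomposition $s=(se_0)\pi_E(s)$ from Proposition~\ref{coproduct} and the fact that $e_0$ is a left identity.

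Next we show that $\pi_G$ is a $\mathsf Z$-precokernel of $i$.
\begin{itemize}
\item[(a)] $\pi_G i$ is constant with value $E$, so it is $\mathsf Z$-trivial.
\item[(b)] Let $\mu\colon S\to Y$ be a morphism of right groups such that $\mu i$ is $\mathsf Z$-trivial, i.e.\ $\mu(E)$ is a singleton. Lemma~\ref{pretorsion-cruciale} gives a unique $\mu'\colon S/{\sim}\to Y$ with $\mu=\mu'\pi_G$. Uniqueness also follows from the surjectivity of $\pi_G$.
\end{itemize}

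This establishes both conditions. The only step that is not immediate is the identification $\pi_G^{-1}(\{E\})=E$, and it reduces to the decomposition $s=(se_0)\pi_E(s)$ together with the fact that idempotents are left identities.
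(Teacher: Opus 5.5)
Your proof is correct and follows the paper's own argument essentially step for step. Condition (1) holds because idempotents must map to $1_G$, and condition (2) uses the sequence $E\to S\to S/{\sim}$ together with Lemma~\ref{Z-prekernel-charact} and Lemma~\ref{pretorsion-cruciale}. You additionally check the identification $\pi_G^{-1}(\{E\})=E$ and the $\mathsf Z$-triviality of $\pi_G i$, which the paper only asserts.
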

	\begin{proof}
		Set $\mathsf Z=\Rzs\cap \Grp$. Consider a right zero semigroup $D$, a group $G$ with identity 1, and a morphism $\varphi\colon D\to G$ in $\Rg$. If $D$ is empty then  $\varphi$ is obviously the empty mapping and it factors as the composition of the empty mapping $D\to \{1\}$ and the inclusion $\{1\}\to G$, that is, $\varphi$ is $\mathsf Z$-trivial. Now suppose  that $D\neq\emptyset$. Since images of idempotents are idempotents, all elements of $D$ are idempotents, and the only idempotent in the group $G$ is  $1$, we immediately have $\varphi(D)=1$. Thus $\varphi$ is $\mathsf Z$-trivial.
		
		Now, let $S$ be any right group,  let $E:=E(S)$ be the set of all idempotents of $S$, consider the group $G:=S/{\sim}$ and the canonical projection $\pi_G\colon S\to G$, defined by $s\mapsto sE$, for every $s\in S$. From Lemma~\ref{Z-prekernel-charact}
		applied to the  morphism $\pi_G\colon S\to G$, we know that  $\pi_G$ has a $\mathsf Z$-prekernel in $\Rg$,  because $\pi_G(E)$ is the singleton $\{E\}$ (it is the trivial subgroup of the group $G=S/{\sim}$). Moreover, $E=\pi_G^{-1}(\{E\})$ and the inclusion morphism $i\colon E\to S$ is a $\mathsf Z$-prekernel of $f$ (Lemma~\ref{Z-prekernel-charact}). 

 It remains to show that $\pi$ is a $\mathsf Z$-precokernel of $i$. Let $\mu\colon S\to T$ be any morphism in $\Rg$ such that $\mu i$ is $\mathsf{Z}$-trivial. This means that $\mu(E)$ is a singleton. Hence we can apply Lemma~\ref{pretorsion-cruciale}, getting that there exists a unique morphism $\mu'\colon S/{\sim}\to T$ such that $\mu=\mu'\pi_G$. The mapping $\mu'$ is defined by $sE\mapsto \mu(s)$. This allows us to conclude.
	\end{proof}

\end{document}